\documentclass[a4paper,10pt,leqno]{article}
\usepackage[T1]{fontenc}
\usepackage[utf8]{inputenc}
\usepackage{lmodern,microtype}
\usepackage{amsmath,amssymb,amsthm,mathtools,mathrsfs}
\usepackage[margin=1.13in]{geometry}
\usepackage{booktabs,tabularx,array,enumitem}
\usepackage{xcolor}
\usepackage{tikz}
\usepackage[colorlinks=true,linkcolor=black,citecolor=black,urlcolor=blue]{hyperref}
\hypersetup{pdftitle={Morse Index, Stability and Spectral Bounds for Periodic Orbits in Reversible Lagrangian Systems},pdfauthor={Alessandro Portaluri and Li Wu}}
\allowdisplaybreaks
\numberwithin{equation}{section}

\theoremstyle{plain}
\newtheorem{thm}{Theorem}[section]
\newtheorem{prop}[thm]{Proposition}
\newtheorem{lem}[thm]{Lemma}
\newtheorem{cor}[thm]{Corollary}
\theoremstyle{definition}

\theoremstyle{remark}
\newtheorem{rmk}[thm]{Remark}
\theoremstyle{plain}
\newtheorem{mainthm}{Theorem}

\newcommand{\R}{\mathbb R}
\newcommand{\C}{\mathbb C}

\newcommand{\dd}{\,\mathrm dt}
\newcommand{\Rev}{\mathcal R}

\newcommand{\cA}{\mathcal A}
\newcommand{\cJ}{\mathscr J}
\newcommand{\bform}{\mathfrak b}
\newcommand{\qK}{\mathcal Q_K}

\DeclareMathOperator{\Graph}{Gr}

\newcommand{\nupos}{\nu_{+}}
\newcommand{\nustar}{\nu_{+}^{*}}
\newcommand{\nubad}{\nu_{\mathrm{off}+}}
\newcommand{\nugrow}{\nu_{\mathrm u}^{+}}

\title{Morse Index, Stability and Spectral Bounds for Periodic Orbits in Reversible Lagrangian Systems}
\author{Alessandro Portaluri\and Li Wu}
\date{}
\begin{document}
\maketitle

\begin{abstract}
We prove quantitative stability and instability estimates for reversible
periodic solutions of strictly Legendre-convex Lagrangian systems.  If the
configuration dimension is $n$, the fixed-period Morse index is $k$, and
$k_D,\nu_D$ are the Dirichlet index and nullity on a half-period, set
\[
   \widetilde k=k-2k_D-\nu_D.
\]
Then the algebraic number $\nu_+$ of positive real Floquet multipliers satisfies
\[
   \nu_+\ge 2n-2\widetilde k,
\]
and we obtain corresponding bounds for the positive real spectrum away from
$1$, allowing arbitrary Jordan chains at the unit multiplier.  Hence at most
$2\widetilde k$ Floquet multipliers can lie outside the positive real axis.  In
particular, when $k=0$ we recover the positive-real-spectrum theorem for
reversible minimizers, while for nonconstant autonomous brake orbits the
bounds improve by two.  If, moreover, $n\ge2$ and the algebraic multiplicity of $1$ is two,
an index-one brake orbit is transversely hyperbolic, with $n-1$ stable and
$n-1$ unstable directions.  Decoupled oscillators show that the refined bounds
can be sharp even when the estimate based on $k$ alone is vacuous, and the
H\'enon--Heiles straight-line orbit provides a nonlinear illustration of the
half-period mechanism.
\end{abstract}

\medskip
\noindent\textbf{Keywords.} Reversible Lagrangian systems; Floquet multipliers;
Morse index; Sturm--Liouville operators; brake orbits; Hamiltonian instability index.

\smallskip
\noindent\textbf{2020 Mathematics Subject Classification.}
Primary 37J45; Secondary 34C25, 34D08, 34B24, 53D12, 70H14.

\section{Introduction and main results}\label{sec:introduction}

The main purpose of this paper is a stability question: how much of the
Floquet spectrum of a reversible periodic orbit is constrained by its
variational index?  We give quantitative lower bounds, with algebraic
multiplicity, for the positive real Floquet spectrum in terms of the
fixed-period Morse index and half-period Dirichlet data.  The estimates retain
the whole generalized eigenspace at the multiplier $1$ and therefore require
no semisimplicity assumption.

The relation between minimality and stability of periodic motions is classical,
but the two notions live in different categories.  The second variation of
the action is a self-adjoint Sturm--Liouville problem, whereas linear stability
is encoded by a symplectic monodromy matrix.  Reversibility links the two:
it decomposes periodic variations into reflection-even and reflection-odd
parts, identified with half-period Neumann and Dirichlet problems, and it
factorizes the full monodromy through the half-period evolution.  The
variational decompositions and boundary-index formulas needed for this step
are known; we recall them, with attribution, only to make the paper
self-contained and to fix the sign and nullity conventions.  The new part is
the quantitative algebraic estimate converting the resulting boundary inertia
into information on prescribed sectors of the Floquet spectrum.

A useful benchmark is the theorem of Ure\~na \cite{Ure18}: for a reversible
periodic minimizer all Floquet multipliers are positive real.  Positive real
spectrum does not itself mean spectral stability, since a reciprocal pair
$\lambda,\lambda^{-1}$ with $\lambda>1$ is hyperbolic.  Our first theorem
extends this spectral-location information beyond minimizers: when the
periodic Morse index is positive it quantifies how many multipliers are still
forced to remain positive real.  At $k=0$ it recovers Ure\~na's conclusion,
whereas for nonconstant autonomous brake orbits the time-translation Jacobi
field yields a stronger estimate and, in the index-one case under the minimal
algebraic multiplicity at $1$, transverse hyperbolicity.

The answer depends on more than the full-period index alone.  On the
half-period, some negative directions are already visible under fixed-endpoint
(Dirichlet) variations.  Removing precisely this contribution leads to the
quantity
\[
 \widetilde k=k-2k_D-\nu_D.
\]
The \emph{boundary-index part} of this mechanism is not new.  Hu, Wu and Yang
proved a Morse-index comparison for general self-adjoint boundary conditions
in \cite[Theorem~1.1]{HWY20}, formulated through the triple index; the
finite-dimensional triple-index identities used there are developed in
\cite{ZWZ18}.  In the present complementary Neumann--Dirichlet situation,
that general theory specializes to the identity
\[
 m^+(\qK)=k_N-k_D-\nu_D=k-2k_D-\nu_D.
\]
We nevertheless give a direct derivation because it fixes all signs and
nullity terms in our conventions and identifies explicitly the matrix
\(\qK=N-K^\top NK\) that enters the spectral argument.  No novelty is claimed
for this Morse-index comparison itself.  The contribution for which we claim
novelty is the subsequent quantitative \emph{algebraic} control of the
positive real Floquet spectrum by the inertia of this form, including the full
generalized eigenspace at the multiplier \(1\), and the resulting refined
stability/instability consequences.  The reflection splitting used in the
specialization is the \(\mathbb Z_2\)-part of the more general dihedral
decomposition of Hu, Portaluri and Yang \cite{HPY20}.

The relation between variational indices and linear instability for periodic
Lagrangian systems has also been developed in a series of works by Portaluri,
Wu and Yang.  In the non-autonomous setting, \cite{PWY21} gives a
spectral-flow/Maslov-index criterion for linear instability, while the
autonomous free-period problem is treated in \cite{PWY22}.  Those results use
parity and spectral-index information to detect instability.  The present
fixed-time reversible problem has a different emphasis: reversibility produces
an exact half-period boundary inertia, and that inertia yields quantitative
lower bounds on prescribed sectors of the Floquet spectrum.

For periodic brake orbits, the present viewpoint is closely related to the
more recent work of Asselle, Hu, Portaluri and Wu \cite{AHPW26}, where the
geometry at the brake instants is used to prove non-minimality and instability
results for natural Lagrangians.  Here we remain in the fixed-time
Legendre-convex setting and retain the half-period Dirichlet index and nullity
explicitly in the final spectral estimates.  This distinction is important:
the correction $2k_D+\nu_D$ contains information that can be invisible to the
full-period Morse index alone.

A short communication containing the core spectral estimate and a compressed
version of the argument has been submitted separately to \emph{Comptes Rendus
Math\'ematique} \cite{PWCR26}.  The present article is intended as the full
account of the theory. In comparison with that note, we make the relation of
the half-period identity with the pre-existing boundary-index theorem
\cite{HWY20} and triple-index formalism \cite{ZWZ18} completely explicit and
include a direct proof for the reader; the additional work is concentrated on
the stability side: we identify the concrete boundary matrix needed for the
Floquet count, treat the generalized eigenspace at the multiplier $1$ without
semisimplicity assumptions, clarify the distinction from the Lin--Zeng
instability index, and compute the indices and spectral counts in explicit
model families; see Section~\ref{sec:examples}.  In particular, a
three-dimensional resonant oscillator shows that the correction
$2k_D+\nu_D$ can turn a vacuous estimate based on $k$ alone into the exact
spectral count.  The H\'enon--Heiles $A$-orbit then provides a nonlinear
illustration in which the Jacobi equation splits into longitudinal and
transverse scalar problems and the autonomous Dirichlet null direction is
visible as the time-translation field.

A related but genuinely different instability-index theorem is due to Lin and
Zeng \cite{LZ22}.  Their framework concerns a time-independent Hamiltonian
generator $JL$, whereas linearization along a nonconstant periodic orbit in
our problem is a time-periodic Hamiltonian system and the relevant spectral
object is its monodromy matrix.  Their negative index is an energy index of the
fixed generator; ours comes from the fixed-period action Hessian and its
half-period realizations.  Section~\ref{sec:lin-zeng} explains why, without
additional structure, there is no canonical identification between these two
index theories.

We now introduce the objects appearing in the statements.  Particular care is
taken with boundary conditions, because periodic, Neumann and Dirichlet Morse
indices are indices of different self-adjoint realizations of the same formal
Jacobi operator.

\subsection{Setting and notation}

Let \(\mathcal O\subset\R^n\) be open and \(T>0\). Consider a
\(C^2\) Lagrangian \(L(t,q,v)\), defined on a neighbourhood of the phase
curve under consideration, with
\begin{equation}\label{eq:L-reversibility}
 L(t+T,q,v)=L(t,q,v),\qquad L(-t,q,-v)=L(t,q,v).
\end{equation}
Let \(x\) be a classical \(T\)-periodic solution of the Euler--Lagrange
equation satisfying \(x(T-t)=x(t)\). Set \(h=T/2\) and, along \(x\),
\begin{equation}\label{eq:PQS}
 P=L_{vv},\qquad Q=L_{vq},\qquad S=L_{qq}.
\end{equation}
We assume the strict Legendre condition along the orbit:
\begin{equation}\label{eq:Legendre}
 P(t)=P(t)^\top\geq\alpha I_n\quad\text{for some }\alpha>0.
\end{equation}
The matrices \(P,Q,S\) are continuous; \(S\) is symmetric. The second
variation of \(\cA(x)=\int_0^T L(t,x,\dot x)\dd\) is represented by
\begin{equation}\label{eq:index-form}
 \bform_I(y,w)=\int_I\bigl(
 \dot y^\top P\dot w+\dot y^\top Qw
 +y^\top Q^\top\dot w+y^\top Sw\bigr)\dd.
\end{equation}
Its quadratic form is \(\bform_I(y,y)\), without a factor of \(1/2\).
When growth assumptions on \(L\) are not imposed, the second variation is
first computed on smooth variations and then extended to \(H^1\) by
\eqref{eq:index-form}; no global \(C^2\) action functional on all of
\(H^1\) is assumed.

Write
\[
 H_P=\{y\in H^1([0,T],\R^n):y(T)=y(0)\},\quad
 H_N=H^1([0,h],\R^n),\quad H_D=H^1_0([0,h],\R^n).
\]
The periodic and half-period Morse indices are
\begin{equation}\label{eq:indices}
 k=m^-(\bform_{[0,T]}|_{H_P}),\qquad
 k_N=m^-(\bform_{[0,h]}|_{H_N}),\qquad
 k_D=m^-(\bform_{[0,h]}|_{H_D}).
\end{equation}
Here the negative index is the maximal dimension of a subspace on which the
quadratic form is negative definite. The Legendre condition and compactness
of the interval make these indices finite. Let \(\nu_N,\nu_D\) be the
nullities of the two half-period forms. The natural Neumann condition is
\(p_y=0\) at the endpoints, where
\begin{equation}\label{eq:quasiderivative}
 p_y=P\dot y+Qy,\qquad z_y=(p_y,y)^\top.
\end{equation}
Thus all indices in this paper refer to specified boundary conditions and to
fixed time.

The Jacobi equation is
\begin{equation}\label{eq:Jacobi}
 \ell y=-\dot p_y+Q^\top\dot y+Sy=0.
\end{equation}
Use phase coordinates \((p,q)\), the matrix and symplectic form
\[
 J=\begin{pmatrix}0&-I_n\\ I_n&0\end{pmatrix},\qquad
 \omega(u,v)=\langle Ju,v\rangle.
\]
Let \(\gamma(0)=I_{2n}\) be the fundamental matrix of the first-order
system equivalent to \eqref{eq:Jacobi}, and set
\[
 K=\gamma(h),\qquad M=\gamma(T),\qquad
 \nu_1=\dim\ker(M-I_{2n}),\qquad
 a_1=\dim\ker\bigl((M-I_{2n})^{2n}\bigr).
\]
In particular, \(\nu_1\) is geometric multiplicity and \(a_1\) is
algebraic multiplicity. They are not interchangeable. For algebraic spectral
counts write
\begin{align}
 \nupos&=\sum_{\lambda\in\sigma(M)\cap(0,\infty)}
       \operatorname{algmult}(\lambda),\label{eq:count-positive}\\
 \nustar&=\sum_{\lambda\in\sigma(M)\cap((0,\infty)\setminus\{1\})}
       \operatorname{algmult}(\lambda)=\nupos-a_1,\label{eq:count-positive-star}\\
 \nubad&=2n-\nupos.\label{eq:count-bad}
\end{align}
Negative lower bounds for these nonnegative counts are understood as vacuous.

\subsection{Main estimates}

The first theorem concerns a general time-periodic reversible system.  It
should be compared with Ure\~na's positive-real-spectrum theorem for reversible
minimizers \cite{Ure18} and with the brake-orbit stability estimates of Hu,
Wu and Yang \cite[Theorems~1.8--1.9]{HWY20}.  The statement below is formulated
instead as an algebraic lower bound for the positive real Floquet spectrum at
arbitrary Morse index, with an explicit correction by \(k_D\) and \(\nu_D\)
and with Jordan chains at \(1\) retained.  It is useful to read the
inequalities from left to right: the actual spectral count is bounded first by
the sharpened half-period quantity \(\widetilde k\), and then by the coarser
full-period Morse index \(k\).

\begin{mainthm}[Reversible periodic systems]\label{thm:non-autonomous}
Under \eqref{eq:L-reversibility}--\eqref{eq:Legendre}, set
\[
 \widetilde k:=k-2k_D-\nu_D.
\]
Then \(\widetilde k\geq0\), and the algebraic spectral counts satisfy the
chains
\begin{align}
 \nupos
 &\geq 2n-2\widetilde k
  =2n-2k+4k_D+2\nu_D
 \geq 2n-2k,\label{eq:main-positive-chain}\\
 \nustar
 &\geq 2n-2\widetilde k-2\nu_1
  =2n-2k+4k_D+2\nu_D-2\nu_1
 \geq 2n-2k-2\nu_1.\label{eq:main-star-chain}
\end{align}
If the algebraic multiplicity \(a_1\) of the multiplier \(1\) is known, one
also has
\begin{equation}\label{eq:main-algebraic}
 \nustar\geq2n-2\widetilde k-a_1.
\end{equation}
\end{mainthm}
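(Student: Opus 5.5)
The plan has two independent halves. The first is the variational identity
\[
 m^+(\qK)=\widetilde k,\qquad \qK=N-K^\top NK,
\]
where \(N=\begin{pmatrix}0&I_n\\ I_n&0\end{pmatrix}\) represents the boundary term \(2\,p^\top q\). The second is a purely linear-algebraic count of Floquet multipliers off the positive real axis in terms of \(m^+(\qK)\).

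For the first half, I would restrict \(\bform_{[0,h]}\) to the \(2n\)-dimensional space of Jacobi fields on \([0,h]\). Integrating by parts gives \(\bform_{[0,h]}(y,y)=[p_y^\top y]_0^h\), which is a quadratic form in the initial datum \(z_y(0)\). I would then:
\begin{itemize}
\item decompose \(H_N\) as the direct sum of \(H_D\) and the Jacobi fields, \(\bform\)-orthogonally;
\item obtain \(k_N=k_D+\nu_D+m^-\) of the boundary form on the Jacobi fields, with the sign conventions of \eqref{eq:index-form};
\item use the reflection splitting of \(H_P\) into even and odd parts, which correspond to Neumann and Dirichlet problems on \([0,h]\), to get \(k=k_N+k_D\).
\end{itemize}
Together these give \(m^+(\qK)=k_N-k_D-\nu_D=k-2k_D-\nu_D\), and \(\widetilde k\ge0\) follows at once because it is a positive inertia index. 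This part is the specialization of \cite{HWY20} described in the introduction; I would carry it out directly only to fix the nullity terms.

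For the second half, reversibility gives \(M=R\,K^{-1}RK\) with \(R=\operatorname{diag}(-I_n,I_n)\) anti-symplectic and \(R^2=I\). The reflected solution \(t\mapsto Rz(T-t)\) is again a solution, so \(K^{-1}\) on the second half equals \(R\gamma(h)R\). Using \(K^\top JK=J\) and the identity \(N=JR\) (up to sign), I would rewrite \(\qK\) so that \(\qK(v,v)\) is an explicit multiple of \(\omega(v,(M-I)v)\)-type expressions in \(Kv\), and more precisely \(\qK(v,w)=\langle S_1v,w\rangle-\langle S_2v,w\rangle\), where \(M\) is the product of the two symmetric forms \(S_1^{-1}S_2\) up to conjugation. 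The point is that \(M\) then becomes similar to a product \(A^{-1}B\) of two symmetric matrices, with \(B-A\) congruent to \(-\qK\) or \(\qK\). For such pencils a classical inertia argument applies: the eigenvalues of \(A^{-1}B\) that are not positive real must be detected by the indefiniteness of the pencil \(B-\mu A\) along a path \(\mu\in(0,\infty)\).

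Concretely, I would take a generalized eigenspace \(E_\lambda\oplus E_{\bar\lambda}\) (real form) for \(\lambda\notin(0,\infty)\). On such a space the Hermitian form \(\langle A\cdot,\cdot\rangle\) restricted to a Lagrangian-type half of it is indefinite or degenerate in a way that forces \(\qK\) to be positive on a subspace of half its dimension. Summing over all such spaces would give \(\nubad\le 2m^+(\qK)=2\widetilde k\), which is \eqref{eq:main-positive-chain}. The inequality \eqref{eq:main-algebraic} is then immediate from \(\nustar=\nupos-a_1\), and the right-hand inequalities in both chains are just \(4k_D+2\nu_D\ge0\).

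The main obstacle is \eqref{eq:main-star-chain}. Since \(a_1\) may exceed \(2\nu_1\) when there are long Jordan chains, the bound does not follow from \eqref{eq:main-algebraic} alone. My plan there is to refine the counting so that it includes the multiplier \(1\): namely, to prove
\[
 \nubad+(a_1-2\nu_1)\le 2\widetilde k.
\]
The idea is that along a Jordan chain \(v_0,v_1,\dots\) at \(\lambda=1\), \(\qK\) vanishes on the eigenvectors, which lie in \(\ker(M-I)\) and so correspond to \(\nu_1\) degenerate directions. Each further pair of chain vectors beyond the first two should then contribute a positive direction of \(\qK\), \(\qK\)-orthogonal to those coming from the off-axis spectrum. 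I would verify this by computing \(\qK(v_j,v_{j'})\) through the chain relations \((M-I)v_j=v_{j-1}\) and reducing to a Hankel-type matrix whose positive inertia is known. Adding \(\nupos=2n-\nubad\) to this refined inequality yields \eqref{eq:main-star-chain}. If the Hankel computation fails in general, the fallback is a perturbation argument: perturb \(K\) within reversible symplectic matrices so as to split the unit multiplier, and use lower semicontinuity of \(m^+(\qK)\) plus at most \(\nu_1\) jumps of nullity.
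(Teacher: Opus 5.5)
Your two-stage plan matches the paper: first the boundary identity \(m^+(\qK)=\widetilde k\), then an inertia count on the spectral decomposition of \(M=NK^\top NK\). Your target \(\nubad+(a_1-2\nu_1)\le2\widetilde k\) is also exactly what the proof of Proposition~\ref{prop:universal} establishes. The gap is that both finite-dimensional estimates are asserted rather than proved, and the mechanisms you indicate do not work as stated.

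\textbf{Off the positive axis.} For nonreal \(\lambda\) the balance is automatic, because \(NM=M^\top N\) makes \(E_\lambda^{\C}\) neutral for \([\cdot,\cdot]_N\). At a negative real multiplier, however, the eigenspace pairs nondegenerately with itself, and your ``Lagrangian-type half'' can be definite. For \(n=1\) and \(K=\left(\begin{smallmatrix}1&2\\-1&-1\end{smallmatrix}\right)\) the multipliers are \(-3\pm2\sqrt2\), and \(N\) (hence also \(NM\)) is definite, with opposite signs, on the two eigenlines. At \(\lambda=-1\) there is not even a reciprocal partner.

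A bare pencil argument cannot supply the balance either. For \(A=-1\), \(B=1\) one has \(A-B<0\) and \(B-\mu A>0\) for all \(\mu\ge0\), yet \(A^{-1}B=-1\).

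The missing idea is to use the reversing involution as an anti-congruence. The identity \(\Rev M\Rev=M^{-1}\) makes \(E_{\mathrm{off}+}\) \(\Rev\)-invariant, and \(\Rev^\top N\Rev=-N\) then forces \(N|_{E_{\mathrm{off}+}}\) to have balanced inertia. Since \(N(I-tM)\) stays nondegenerate on \(E_{\mathrm{off}+}\) for \(t\in[0,1]\), this balance passes to \(\qK=N(I-M)\). That step uses the \(N\)-orthogonality and nondegeneracy of the spectral summands, which your ``summing'' also tacitly needs.

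Your homotopy \(N(M-\mu I)\), \(\mu\ge0\), gives only \(\operatorname{In}(NM|_{E_{\mathrm{off}+}})=\operatorname{In}(-N|_{E_{\mathrm{off}+}})\). It could be closed using the global congruence \(NM=K^\top NK\cong N\) together with inertia invariance on the positive-real sectors, but some such input is indispensable.

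\textbf{At the multiplier \(1\).} The rule ``one positive direction per further pair of chain vectors beyond the first two'' gives \(\lfloor(\ell-2)/2\rfloor\) per Jordan block, which is too weak for odd \(\ell\). Two blocks of length \(3\) give \(a_1-2\nu_1=2\) but no such pair. The correct worst case per block is \(\lceil\ell/2\rceil-1\): the unpaired vector also costs a positive direction.

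Moreover, in an arbitrary Jordan basis the Gram matrix of \(\qK|_{E_1}\) is block-Hankel with couplings between different chains. A single chain can even have \([v_0,v_{\ell-1}]_N=0\). Reducing to scalar Hankel blocks requires the canonical form (sign characteristic) of \(N\)-self-adjoint matrices, which you do not invoke. With it your route works, but it is heavier.

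The perturbation fallback would need a perturbation of \(K\) in \(\mathrm{Sp}(2n)\) that moves \emph{all} \(a_1\) multipliers at \(1\) off \((0,\infty)\). Otherwise the bound \(m^+(\qK')\le m^+(\qK)+\nu_1\) loses exactly what you need, and nothing in the proposal provides such a perturbation.

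The paper avoids all of this with a basis-free device. Take \(A=(M-I)|_{E_1}\) and \(V_1=\sum_jA^j\ker A^{2j+1}\). Then:
\begin{itemize}
\item \(V_1\) is \(\qK\)-isotropic, by \(N\)-self-adjointness of \(A\);
\item \(\dim V_1=\sum_r\lceil\ell_r/2\rceil\ge a_1/2\);
\item the radical of \(\qK|_{E_1}\) is exactly \(\ker A\), so every isotropic subspace has dimension at most \(m^+(\qK|_{E_1})+\nu_1\).
\end{itemize}

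\textbf{A smaller point in your first half.} The decomposition \(H_N=H_D\oplus\cJ\) fails when \(\nu_D>0\). There \(H_D\cap\cJ\) is the Dirichlet kernel, and \(H_D+\cJ\) has codimension \(\nu_D\) in \(H_N\). The \(\nu_D\) term arises from pairing that kernel with \(\nu_D\) boundary directions into hyperbolic planes, as in Lemma~\ref{lem:boundary-inertia}.
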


At \(k=0\), the first chain forces all \(2n\) multipliers to be positive
real. Thus the theorem recovers, in the present Legendre-convex formulation,
the spectral conclusion of Ure\~na's theorem for reversible minimizers
\cite[Theorem~1.1]{Ure18}.  More generally, the number \(\nubad\) of multipliers outside the
positive real axis satisfies \(\nubad\leq2\widetilde k\leq2k\).  Thus
\(\widetilde k\) is the part of the Morse index that is actually visible to
this spectral estimate.

For an autonomous brake orbit there is an additional geometric Jacobi field:
time translation produces \(\dot x\), and the brake symmetry makes this field
Dirichlet at the two brake instants.  This supplies one unit of Dirichlet
nullity and improves the preceding chains.

\begin{mainthm}[Nonconstant autonomous brake orbits]\label{thm:autonomous}
Assume in addition that \(L\) is autonomous and \(x\) is nonconstant.  Then
\(\nu_D\geq1\), \(k\geq1\), and
\begin{align}
 \nupos&\geq2n-2\widetilde k\geq2n-2k+2,
 \label{eq:autonomous-positive-chain}\\
 \nustar&\geq2n-2\widetilde k-2\nu_1
 \geq2n-2k+2-2\nu_1.
 \label{eq:autonomous-star-chain}
\end{align}
If \(a_1=2\), then
\begin{equation}\label{eq:autonomous-a1-two}
 \nustar\geq2n-2k.
\end{equation}
\end{mainthm}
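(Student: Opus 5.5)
We reduce Theorem~\ref{thm:autonomous} to Theorem~\ref{thm:non-autonomous}. The only new ingredient is the time-translation Jacobi field, and all spectral chains then follow by substitution.

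\emph{Step 1: $\dot x$ is a Dirichlet Jacobi field.} Since $L$ is autonomous, differentiating the Euler--Lagrange equation in $t$ shows that $y=\dot x$ solves \eqref{eq:Jacobi}, and $y$ is $T$-periodic. The symmetry $x(T-t)=x(t)$ gives $\dot x(T-t)=-\dot x(t)$, so $\dot x(h)=-\dot x(h)=0$. Reversibility in $t$ combined with periodicity gives $x(-t)=x(t)$, which forces $\dot x(0)=0$. Hence $\dot x|_{[0,h]}$ lies in $H_D$.

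It is also nonzero. If $\dot x\equiv0$ on $[0,h]$, then $x$ is constant on $[0,h]$, and by the reflection $x(T-t)=x(t)$ it is constant on $[0,T]$, contradicting nonconstancy. Since every Dirichlet Jacobi field lies in the null space of $\bform_{[0,h]}|_{H_D}$, we obtain $\nu_D\geq1$.

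\emph{Step 2: $k\geq1$ and the chains.} Theorem~\ref{thm:non-autonomous} gives $\widetilde k=k-2k_D-\nu_D\ge0$. Therefore $k\geq 2k_D+\nu_D\geq\nu_D\geq1$, and moreover $2\widetilde k\le 2k-2\nu_D\le 2k-2$. Substituting this into \eqref{eq:main-positive-chain} and \eqref{eq:main-star-chain} yields \eqref{eq:autonomous-positive-chain} and \eqref{eq:autonomous-star-chain}.

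\emph{Step 3: the case $a_1=2$.} From \eqref{eq:main-algebraic} with $a_1=2$ we get
\[
\nustar\geq 2n-2\widetilde k-2\geq 2n-(2k-2)-2=2n-2k,
\]
which is \eqref{eq:autonomous-a1-two}.

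\emph{Main obstacle.} The only delicate point is Step~1: justifying $\dot x(0)=0$ from the symmetries, and checking that the Dirichlet nullity really counts the Jacobi field $\dot x$. For the first, I would use that a $T$-periodic $x$ with $x(T-t)=x(t)$ also satisfies $x(-t)=x(t)$, since $x(-t)=x(T-t)=x(t)$. For the second, I would use the standard identification of the null space of $\bform|_{H_D}$ with Dirichlet solutions of \eqref{eq:Jacobi}, obtained by integration by parts, which the paper's Sturm--Liouville conventions already provide. Strict Legendre convexity \eqref{eq:Legendre} guarantees that $x$ is $C^2$, so $\dot x\in H^1$ and $\dot x|_{[0,h]}$ is a legitimate element of $H_D$.
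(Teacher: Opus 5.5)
Your proof is correct and follows essentially the same route as the paper's. The time-translation field $\dot x$ gives a nonzero element of the half-period Dirichlet kernel, so $\nu_D\ge1$. Then $0\le\widetilde k\le k-1$ is substituted into the general bounds. The paper cites Proposition~\ref{prop:universal} directly and gets the $a_1=2$ case by subtracting $a_1$ from the $\nupos$ bound, which is exactly what \eqref{eq:main-algebraic} encodes.
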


\begin{rmk}[Why nonconstancy is necessary]\label{rmk:equilibrium}
For \(L(q,v)=\tfrac12v^2-\tfrac12\omega^2q^2\), the equilibrium
\(x\equiv0\), viewed as a periodic solution with \(T=\pi/\omega\), has
\(k=1\) and \(M=-I_2\). It has no positive real multiplier, so the first
bound in \eqref{eq:autonomous-positive-chain}--\eqref{eq:autonomous-star-chain} would be false if equilibria were
included. Theorem~\ref{thm:non-autonomous} does apply to this equilibrium.
\end{rmk}

For the time-reversal symmetry used here, \(x(T-t)=x(t)\) implies
\(\dot x(0)=\dot x(h)=0\). In the autonomous case these are brake instants.
The Jacobi field \(\dot x\) then belongs to the half-period Dirichlet
kernel. Its contribution to the Dirichlet nullity accounts directly for the
improvement in Theorem~\ref{thm:autonomous}.

The proof can now be summarized geometrically.  Reflection splits the
periodic variation space into an even part and an odd part, producing
\(k=k_N+k_D\).  The remaining difference between these two half-period
problems is finite-dimensional: it is recorded by the endpoint flux of Jacobi
fields.  If \(K\) transports Cauchy data from one brake half-period endpoint
to the other, the symmetric form
\(\qK=N-K^\top NK\) measures the defect of this transport with respect to the
Neumann--Dirichlet boundary pairing \(N\).  Its positive inertia is exactly
\begin{equation}\label{eq:central-identity-intro}
 m^+(\qK)=k_N-k_D-\nu_D=k-2k_D-\nu_D.
\end{equation}
As emphasized above, the Morse-index identity itself should be viewed as the
Neumann--Dirichlet specialization of \cite[Theorem~1.1]{HWY20}, together with
the triple-index formula of \cite[Lemma~3.13]{ZWZ18}; what will matter here is
its explicit realization by the matrix \(\qK\).  Equivalently, \(\qK\) is the quadratic form induced on \(\Graph K\) by the
pair of transverse Lagrangian boundary spaces corresponding to Neumann and
Dirichlet conditions.  Reversibility then identifies
\(\qK=N(I-M)\).  On the spectral subspace outside the positive real axis
this form is nondegenerate and has balanced inertia; a separate nilpotent
argument treats the generalized eigenspace at \(1\).  No semisimplicity
assumption is used.

\section{The variational identity on a half-period}\label{sec:variational}

This section extracts from the infinite-dimensional second variation the
finite-dimensional quantity that will later control the Floquet spectrum.
Most of the variational ingredients in this section are standard, or are
special cases of the general boundary-condition Morse-index theory in
\cite{HWY20} and the symmetry decomposition in \cite{HPY20}.  We retain the
direct proofs to keep the paper self-contained and, more importantly, to make
our sign, nullity and boundary-pairing conventions completely explicit.
There are three steps.  First we specify the self-adjoint realizations of the
Jacobi operator and use reflection to split the periodic problem.  Next we
compare the Neumann and Dirichlet indices through the boundary values of
Jacobi fields.  Finally we identify that boundary form with \(\qK\) and with
the corresponding triple-index form in the Lagrangian Grassmannian.

\subsection{Analytic realization and reflection splitting}

We begin at the operator level in order to remove any ambiguity about what is
meant by the periodic, Neumann and Dirichlet indices.  The differential
expression is the same in all three cases; only the form domain and the
operator boundary conditions change.

The coefficient symmetries inherited from \(L\) and \(x\) are
\begin{equation}\label{eq:coefficient-reflection}
 P(T-t)=P(t),\qquad S(T-t)=S(t),\qquad Q(T-t)=-Q(t).
\end{equation}
The form \(\bform_I\) satisfies a G\aa rding inequality
\(\bform_I(y,y)\geq c\|\dot y\|_{L^2}^2-C\|y\|_{L^2}^2\),
with \(c>0\). Adding a sufficiently large multiple of the \(L^2\) norm
makes it coercive on each form domain. The compact embedding
\(H^1(I)\hookrightarrow L^2(I)\) therefore yields a self-adjoint
realization with compact resolvent and the indices in \eqref{eq:indices}.

With only continuous coefficients the natural operator domain is expressed
through the quasiderivative \(p_y\).  We denote by
\(\mathscr L_{\mathrm{per}}\), \(\mathscr L_N\), and \(\mathscr L_D\) the
three self-adjoint realizations of the formal Jacobi expression \(\ell\):
\begin{align}
 \mathcal D(\mathscr L_{\mathrm{per}})
 &=\{y\in H^1([0,T],\R^n):p_y\in H^1,
      \ y(T)=y(0),\ p_y(T)=p_y(0)\},\label{eq:operator-periodic}\\
 \mathcal D(\mathscr L_N)
 &=\{y\in H^1([0,h],\R^n):p_y\in H^1,
      \ p_y(0)=p_y(h)=0\},\label{eq:operator-neumann}\\
 \mathcal D(\mathscr L_D)
 &=\{y\in H^1([0,h],\R^n):p_y\in H^1,
      \ y(0)=y(h)=0\},\label{eq:operator-dirichlet}
\end{align}
with \(\mathscr L_{\bullet}y=\ell y\) on the corresponding domain.  The
Morse indices \(k,k_N,k_D\) in \eqref{eq:indices} are precisely the numbers
of negative eigenvalues, counted with multiplicity, of
\(\mathscr L_{\mathrm{per}},\mathscr L_N,\mathscr L_D\), respectively;
the nullities are the dimensions of their kernels.  Thus ``Neumann'' and
``Dirichlet'' below always refer to these operator realizations, not merely to
informal endpoint labels.

An \(H^2\) description would require additional coefficient regularity and
is not needed here.  A solution of \eqref{eq:Jacobi} is uniquely determined
by its Cauchy datum \((p_y(t_0),y(t_0))\) at any time \(t_0\), because the
associated first-order system has continuous coefficients.

The following reflection splitting is not a new result.  It is the
\(\mathbb Z_2\)-reflection component of the dihedral-equivariant decomposition
of the path space developed in \cite[Section~3]{HPY20}; see in particular the
additive Morse-index decomposition obtained by restricting the Hessian to the
reflection-invariant and reflection-anti-invariant subspaces.  In the present
brake-orbit notation these two subspaces identify, after restriction to
\([0,T/2]\), with the Neumann and Dirichlet half-period problems,
respectively.  The same Neumann/Dirichlet splitting is used in the brake-orbit
analysis of \cite{HWY20}.  Thus the Morse-index equality in
\eqref{eq:splitting} is a specialization of the earlier equivariant
index decomposition, and the nullity equality follows from the same
orthogonal splitting.  We reproduce the short proof only for
self-containment and to identify precisely the quasiderivative boundary
conditions used here. 

\begin{lem}[Reflection splitting]\label{lem:splitting}
Let \(\mathscr T:H_P\to H_P\) be the reflection
\((\mathscr T y)(t)=y(T-t)\).  Define
\[
 H_P^+=\ker(\mathscr T-I),\qquad H_P^-=\ker(\mathscr T+I).
\]
For every \(y\in H_P\) write
\[
 y_+:=\frac12(y+\mathscr T y),\qquad
 y_-:=\frac12(y-\mathscr T y),
\]
so that \(y=y_++y_-\), with \(y_+\in H_P^+\) and
\(y_-\in H_P^-\).  Then
\begin{equation}\label{eq:splitting}
 k=k_N+k_D,\qquad \nu_1=\nu_N+\nu_D.
\end{equation}
\end{lem}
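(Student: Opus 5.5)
The plan is to prove that the reflection \(\mathscr T\) is a \(\bform_{[0,T]}\)-isometry, so that \(H_P=H_P^+\oplus H_P^-\) is \(\bform\)-orthogonal. Then we identify the two summands, by restriction to \([0,h]\), with \(H_N\) and \(H_D\) in such a way that the forms correspond up to the harmless factor \(2\).

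\textbf{Step 1 (invariance).} We first check \(\bform_{[0,T]}(\mathscr Ty,\mathscr Tw)=\bform_{[0,T]}(y,w)\). Since \((\mathscr Ty)^\cdot(t)=-\dot y(T-t)\), the substitution \(s=T-t\) together with \eqref{eq:coefficient-reflection} handles each term:
\begin{itemize}
\item the \(P\) term is unchanged, because of two sign flips on the derivatives;
\item the \(S\) term is unchanged;
\item the two \(Q\) terms each pick up one sign from a derivative and one from \(Q(T-t)=-Q(t)\), so they are also unchanged.
\end{itemize}
Because \(\mathscr T^2=I\) and \(\mathscr T\) preserves the periodicity condition, it is a \(\bform\)-isometric involution. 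Hence for \(y_\pm\in H_P^\pm\) we get \(\bform(y_+,y_-)=\bform(\mathscr Ty_+,\mathscr Ty_-)=-\bform(y_+,y_-)=0\). The decomposition \(y=y_++y_-\) is therefore \(\bform\)-orthogonal, and the standard fact \(m^-(\bform|_{A\oplus B})=m^-(\bform|_A)+m^-(\bform|_B)\) for orthogonal sums gives \(k=m^-(\bform|_{H_P^+})+m^-(\bform|_{H_P^-})\). The same splitting applies to the radicals, giving \(\ker=\ker_+\oplus\ker_-\).

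\textbf{Step 2 (identification).} Next we show that restriction \(R:y\mapsto y|_{[0,h]}\) maps \(H_P^+\) isomorphically onto \(H_N\) and \(H_P^-\) isomorphically onto \(H_D\), with \(\bform_{[0,T]}(y,w)=2\bform_{[0,h]}(Ry,Rw)\). The factor \(2\) comes from applying the invariance of Step 1 to the interval \([h,T]\).

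\begin{itemize}
\item \emph{Even case.} For \(u\in H_N\), define \(y\) by \(y=u\) on \([0,h]\) and \(y(t)=u(T-t)\) on \([h,T]\). This is continuous at \(h\), lies in \(H^1\), and satisfies \(y(T)=u(0)=y(0)\). No constraint is imposed, which matches the natural (Neumann) form domain \(H_N\).
\item \emph{Odd case.} An odd function must satisfy \(y(h)=-y(h)\), so \(y(h)=0\). Periodicity combined with oddness gives \(y(0)=y(T)=-y(0)\), so \(y(0)=0\). Conversely, the odd extension of \(u\in H^1_0\) is \(H^1\) and periodic.
\end{itemize}
This yields \(k=k_N+k_D\) immediately, together with \(\dim\ker(\bform|_{H_P})=\nu_N+\nu_D\).

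\textbf{Step 3 (nullity equals \(\nu_1\)).} The remaining point, which we expect to need the most care, is that the nullity of the periodic form equals \(\nu_1=\dim\ker(M-I)\). By definition \(\nu_1\) counts solutions of the first-order system with \(z(T)=z(0)\), i.e. periodic Jacobi fields with periodic quasiderivative. We will show that the radical of \(\bform|_{H_P}\) equals \(\ker\mathscr L_{\mathrm{per}}\). The argument is to integrate by parts with the quasiderivative: if \(\bform(y,w)=0\) for all \(w\in H_P\), then testing against compactly supported \(w\) first gives \(p_y\in H^1\) and \(\ell y=0\) in the weak sense. The boundary term then reduces to \(\langle p_y(T)-p_y(0),w(0)\rangle=0\) for arbitrary \(w(0)\), which forces \(p_y(T)=p_y(0)\). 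The converse inclusion follows by the same integration by parts. Since the map \(z\mapsto(p_y,y)\) identifies periodic Jacobi fields with \(\ker(M-I)\) by uniqueness of Cauchy data, this gives \(\nu_1=\nu_N+\nu_D\). Analogous integrations by parts would show that the half-period nullities coincide with the operator kernels under \eqref{eq:operator-neumann}–\eqref{eq:operator-dirichlet}, but that is not needed for the counting identity itself.
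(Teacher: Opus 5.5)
Your proposal is correct and takes essentially the same route as the paper. The $\mathscr T$-invariance of $\bform_{[0,T]}$ gives the form-orthogonal even/odd splitting, restriction to $[0,h]$ identifies the two summands with $H_N$ and $H_D$ up to the factor $2$, and periodic Jacobi fields are matched with $\ker(M-I)$. The only differences are in emphasis: you prove that the radical of $\bform|_{H_P}$ equals $\ker\mathscr L_{\mathrm{per}}$ by an explicit integration by parts, which the paper only asserts, and you use the definition of $\nu_N,\nu_D$ as form nullities, where the paper instead checks via $Q(0)=Q(h)=0$ that the even kernel satisfies $p_y(0)=p_y(h)=0$.
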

\begin{proof}
By \eqref{eq:coefficient-reflection}, \(\mathscr T\) preserves the periodic
form domain and
\(
 \bform_{[0,T]}(\mathscr T y,\mathscr T w)=\bform_{[0,T]}(y,w).
\)
Hence \(H_P^+\) and \(H_P^-\) are form-orthogonal: if
\(y_+\in H_P^+\) and \(y_-\in H_P^-\), then
\[
 \bform(y_+,y_-)
 =\bform(\mathscr T y_+,\mathscr T y_-)
 =-\bform(y_+,y_-),
\]
so the mixed term vanishes.

We next identify the two summands with the half-period form spaces.  The
restriction map
\(
 r_+:H_P^+\to H_N,\ y\mapsto y|_{[0,h]},
\)
is an isomorphism.  Its inverse is the even reflection of an arbitrary
\(u\in H^1([0,h],\R^n)\): set \(y(t)=u(t)\) for \(0\le t\le h\) and
\(y(t)=u(T-t)\) for \(h\le t\le T\).  At the level of the operator kernel, evenness gives
$\dot y(0)=\dot y(h)=0$.  Moreover
\eqref{eq:coefficient-reflection}, together with periodicity, implies
$Q(0)=Q(h)=0$.  Hence the quasiderivative
$p_y=P\dot y+Qy$ satisfies
\(p_y(0)=p_y(h)=0\), which are precisely the natural Neumann boundary
conditions.  Thus the nullspace on this summand is exactly
\(\ker\mathscr L_N\).

Similarly, the restriction
\(
 r_-:H_P^-\to H_D
\)
is an isomorphism.  Indeed, oddness with respect to \(t=0\) (using
periodicity) gives \(y(0)=0\), while oddness with respect to the midpoint
gives \(y(h)=0\).  Conversely every \(u\in H_0^1([0,h],\R^n)\) has a unique
odd reflected periodic extension.  On both summands the full-period form is
twice the corresponding half-period form:
\[
 \bform_{[0,T]}(y_\pm,y_\pm)
 =2\,\bform_{[0,h]}(r_\pm y_\pm,r_\pm y_\pm).
\]
Therefore negative indices and nullities add, proving the first equality and
\(
 \dim\ker\mathscr L_{\mathrm{per}}=\nu_N+\nu_D.
\)
Finally, a periodic Jacobi field lies in
\(\ker\mathscr L_{\mathrm{per}}\) exactly when its initial phase datum is
fixed by the monodromy, so
\(\dim\ker\mathscr L_{\mathrm{per}}=\dim\ker(M-I)=\nu_1\).
\end{proof}

The direct formulation above is useful here because it applies at degenerate
orbits and does not presume that \(Q(t)\) vanishes away from the brake
instants.

\subsection{A boundary inertia formula}

The reflection splitting reduces the periodic problem to two half-period
problems, but it does not yet explain their difference.  We now isolate that
difference.  The key observation is that after fixing the endpoints, the
remaining finite-dimensional degrees of freedom are precisely the boundary
values of Jacobi fields.  Their boundary flux carries the missing inertia.

For this subsection put \(H=H_N\), \(D=H_D\), and
\(\bform=\bform_{[0,h]}\). The space
\[
 \cJ=\{y\in H:\bform(y,w)=0\text{ for every }w\in D\}
\]
is precisely the \(2n\)-dimensional space of Jacobi fields. Its elements
satisfy \eqref{eq:Jacobi}, initially in the distributional sense and then in
the first-order sense through \(p_y\). Green's identity gives
\begin{equation}\label{eq:boundary-form}
 \bform(y,w)=\bigl[p_y(t)^\top w(t)\bigr]_0^h,
 \qquad y\in\cJ,\quad w\in H.
\end{equation}

At the abstract level, the next reduction is also not new.  The general
boundary-condition Morse-index theorem of Hu--Wu--Yang
\cite[Theorem~1.1]{HWY20} gives
\[
 m^-(\Lambda_0)-m^-(\Lambda_D)
 =i(\operatorname{Gr}\gamma(h),\Lambda_0,\Lambda_D)
\]
for an arbitrary self-adjoint boundary condition \(\Lambda_0\).  Taking
\(\Lambda_0=\Lambda_N\) and writing the resulting triple-index form in terms
of endpoint values of Jacobi fields gives precisely the fixed-endpoint inertia
reduction \eqref{eq:boundary-inertia} below; equivalently, it follows from
\cite[Theorem~1.1]{HWY20} together with the finite-dimensional triple-index
formula of \cite[Lemma~3.13]{ZWZ18}.  Formula \eqref{eq:boundary-inertia} is
therefore recalled here in a concrete Hilbert-space form, not claimed as a new
index theorem.  We give a direct proof because the term \(\nu_D\) is easy to
lose if one argues only in the nondegenerate case, and this term is essential
for the autonomous improvement later on.

\begin{lem}[Inertia with fixed endpoints]\label{lem:boundary-inertia}
The restricted form on Jacobi fields satisfies
\begin{equation}\label{eq:boundary-inertia}
 k_N=k_D+\nu_D+m^-(\bform|_{\cJ}).
\end{equation}
\end{lem}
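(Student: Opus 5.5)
Write $\bform_D$ for the restriction of $\bform$ to $D$, so that $k_D=m^-(\bform_D)$ and $\nu_D=\dim\ker\bform_D$, where $\ker\bform_D=\ker\mathscr L_D=\cJ\cap D$.

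The first step is a decomposition of $H$. Every $y\in H$ should split as a Jacobi field plus a Dirichlet variation. For a boundary datum $(y(0),y(h))$ one wants $u\in\cJ$ with these endpoint values, and this requires care when $\nu_D>0$. The evaluation map $E:\cJ\to\R^{2n}$, $u\mapsto(u(0),u(h))$, has kernel $\cJ\cap D=\ker\bform_D$, of dimension $\nu_D$. Its image is therefore a subspace of codimension $\nu_D$, so the Jacobi fields alone need not reach every endpoint datum.

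To handle this I will work with the $\bform$-orthogonal complement rather than a direct sum. By the definition of $\cJ$,
\[
 \cJ=D^{\perp_\bform}=\{y\in H:\bform(y,w)=0\ \forall w\in D\}.
\]
I then apply the standard abstract inertia formula for a symmetric form with finite index on a space $H$ and a closed subspace $D$:
\[
 m^-(\bform|_H)=m^-(\bform|_D)+m^-(\bform|_{D^\perp})+\dim(D\cap D^\perp)-\dim(D\cap D^\perp\cap\ker\bform|_H).
\]
In our setting $D\cap D^\perp=\cJ\cap D=\ker\bform_D$, which has dimension $\nu_D$. Any $y\in\cJ\cap D$ that also lies in $\ker\bform|_H$ is a Jacobi field with $y=0$ and $p_y=0$ at the endpoints. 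Uniqueness of the Cauchy problem forces $y=0$, so the last term vanishes. This yields exactly $k_N=k_D+\nu_D+m^-(\bform|_\cJ)$.

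It remains to justify the abstract formula in this Hilbert setting, and I would prove it directly. First, $H=D+\cJ$. Given $y$, its endpoint datum has codimension at most $\nu_D$ obstruction. Solve $\mathscr L_D w=-\ell y$ in the weak sense, which is possible modulo $\ker\mathscr L_D$ by Fredholm theory, since $\mathscr L_D$ has compact resolvent and the G\aa rding inequality holds. The obstruction is pairing with $\ker\mathscr L_D$. Second, to see where the extra $\nu_D$ negative directions come from: for each $e\in\ker\bform_D$, $e\neq0$, Cauchy uniqueness gives $p_e(t_0)\ne0$ at some endpoint $t_0$. Then choose $v\in H$ with $\bform(e,v)=[p_e^\top v]_0^h\ne0$. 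Pairing $e$ with such $v$ gives hyperbolic planes, each contributing exactly one negative direction.

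Concretely, I would choose a complement $V$ of dimension $\nu_D$, spanned by smooth functions whose boundary values pair nondegenerately with the fluxes $(p_e(0),p_e(h))$, $e\in\cJ\cap D$. Then $H=D_0\oplus(\ker\bform_D\oplus V)\oplus\cJ'$ after a Gram–Schmidt-type modification, where $D_0$ is a complement of $\ker\bform_D$ in $D$. Counting negative squares block by block gives $k_D$, then $\nu_D$ from the hyperbolic blocks, then $m^-(\bform|_\cJ)$.

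The main obstacle is this degenerate bookkeeping: showing that $\ker\bform_D\oplus V$ is nondegenerate and contributes exactly $\nu_D$, while the $\cJ$-part still contributes $m^-(\bform|_\cJ)$ although $\cJ\supset\ker\bform_D$. I would check the block structure carefully, using that $\ker\bform_D$ is $\bform$-orthogonal to all of $D$ and to all of $\cJ$. The latter holds by \eqref{eq:boundary-form}, since elements of $\ker\bform_D$ vanish at the endpoints.
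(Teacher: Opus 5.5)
Your proposal is correct and is essentially the paper's proof. Both arguments split off a closed complement $D_0$ of $Z=\ker(\bform|_D)$ on which $\bform$ is nondegenerate; this Fredholm input, supplied by the G\aa rding inequality, is what makes your abstract inertia formula valid here, since for an arbitrary symmetric form of finite index it can fail. Both then use Cauchy uniqueness to pair $Z$ nondegenerately with $\nu_D$ boundary directions (your $V$, the paper's $F\subset D_0^{\perp_\bform}$) in a block of inertia $(\nu_D,\nu_D,0)$, and identify the remainder with $\cJ$ modulo its radical $Z$. Delete the sentence ``First, $H=D+\cJ$'': it is false when $\nu_D>0$, because $D+\cJ=Z^{\perp_\bform}$ has codimension $\nu_D$, as you noted yourself. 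It is also not needed once $V$ is made $\bform$-orthogonal to $D_0$ and the complement of $Z$ in $\cJ$ is corrected by elements of $Z$.
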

\begin{proof}
Let \(Z=\ker(\bform|_D)\), so \(\dim Z=\nu_D\). Choose a closed
complement \(D_0\) of \(Z\) in \(D\) on which \(\bform\) is
nondegenerate. Such a complement exists because the form operator on \(D\)
is self-adjoint Fredholm. Its negative index is \(k_D\). Consequently
\[
 H=D_0\oplus_{\bform}W,\qquad
 W=D_0^{\perp_{\bform}},\qquad \dim W=2n+\nu_D.
\]
The space \(Z\subset W\) is isotropic. Moreover
\(Z\cap\ker(\bform|_H)=\{0\}\): a field in this intersection satisfies
both Dirichlet and Neumann conditions, hence has zero initial phase value
and vanishes by uniqueness.

It follows that the pairing of \(Z\) with \(W\) has rank \(\nu_D\).
Choose a subspace \(F\subset W\) of dimension \(\nu_D\) on which this
pairing is nonsingular. In appropriate bases the form on \(Z\oplus F\)
has matrix
\[
 \begin{pmatrix}0&I_{\nu_D}\\ I_{\nu_D}&C\end{pmatrix},\qquad C=C^\top,
\]
and therefore inertia \((\nu_D,\nu_D,0)\), as follows by replacing the
\(F\)-basis by its difference with one half of the corresponding
\(ZC\)-basis. Let \(G\) be its form-orthogonal complement in \(W\).
Since \(D=D_0\oplus Z\), orthogonality to \(D\) now gives
\(\cJ=Z\oplus G\), and \(Z\) is in the radical of this restricted
form. Thus
\[
 m^-(\bform|_H)=k_D+\nu_D+m^-(\bform|_G),\qquad
 m^-(\bform|_{\cJ})=m^-(\bform|_G),
\]
which proves the formula.
\end{proof}

Thus \eqref{eq:boundary-inertia} should be read as a direct, self-contained
rederivation of the Neumann--Dirichlet specialization of the earlier general
boundary-condition index theorem \cite[Theorem~1.1]{HWY20}, with the
Dirichlet nullity contribution displayed explicitly.

The term \(\nu_D\) has a concrete origin: allowing the endpoints to vary
pairs each Dirichlet null direction with a boundary direction, contributing
one positive and one negative direction. Omitting this contribution would
lose both the exact identity and the autonomous improvement.

\subsection{Identification with the reversible form and the triple index}

We now express the boundary inertia in phase-space language.  This is the
point where the variational problem becomes finite-dimensional: a Jacobi
field is represented by its initial Cauchy datum, and the half-period map
\(K\) transports this datum to the other endpoint.

Set
\begin{equation}\label{eq:RN}
 \Rev=\begin{pmatrix}-I_n&0\\0&I_n\end{pmatrix},\qquad
 N=\begin{pmatrix}0&I_n\\I_n&0\end{pmatrix},\qquad
 \qK:=N-K^\top NK.
\end{equation}
The letters \(S(t)\) and \(\Rev\) distinguish the position Hessian from
the reversing involution.  The matrix \(N\) represents the symmetric
boundary pairing \((p,q)\mapsto2p^\top q\).  Consequently \(\qK\) has a
direct geometric meaning: it measures how much this pairing changes after
transporting Cauchy data through one half-period by \(K\).  Thus \(\qK=0\)
would mean that \(K\) preserves this Neumann--Dirichlet polarization exactly,
whereas the positive and negative inertia of \(\qK\) measure the two possible
signs of the boundary-flux defect.  Formula \eqref{eq:QK-boundary-congruence}
below makes this interpretation precise. For \(z=(p_0,q_0)\), let \(y_z\) be the Jacobi
field with initial phase value \(z\). Then \(z_{y_z}(h)=Kz\), and
\eqref{eq:boundary-form} gives
\begin{equation}\label{eq:QK-boundary-congruence}
 \bform(y_z,y_z)=p_h^\top q_h-p_0^\top q_0
       =-\tfrac12 z^\top \qK z.
\end{equation}
The map \(z\mapsto y_z\) is an isomorphism onto \(\cJ\).

The following proposition is therefore \emph{recalled in specialized form},
not proposed as a new Morse-index theorem.  As an index identity it is the
concrete Neumann--Dirichlet specialization of \cite[Theorem~1.1]{HWY20}, or,
equivalently, of that theorem combined with the finite-dimensional triple-index
formula \cite[Lemma~3.13]{ZWZ18}.  We include a complete direct proof solely
for self-containment and because the later spectral argument needs the
particular matrix representative \(\qK\), its sign convention, and the
nullity term \(\nu_D\) explicitly.

\begin{prop}[Neumann--Dirichlet boundary identity]\label{prop:exact-identity}
One has
\begin{equation}\label{eq:exact-identity}
 m^+(\qK)=k_N-k_D-\nu_D=k-2k_D-\nu_D.
\end{equation}
In particular, \(0\leq m^+(\qK)\leq k-\nu_D\leq k\).
\end{prop}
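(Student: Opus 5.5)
The proof combines Lemma~\ref{lem:boundary-inertia}, the congruence \eqref{eq:QK-boundary-congruence}, and Lemma~\ref{lem:splitting}. There are four steps.

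\emph{Step 1: identify $m^-(\bform|_{\cJ})$ with $m^+(\qK)$.} The map $z\mapsto y_z$ is a linear isomorphism $\R^{2n}\to\cJ$. By \eqref{eq:QK-boundary-congruence}, the pullback of the quadratic form $\bform|_{\cJ}$ under this map is $-\tfrac12\,z^\top\qK z$. Inertia is invariant under congruence, and multiplying by the negative scalar $-\tfrac12$ swaps positive and negative inertia. Hence
\[
 m^-(\bform|_{\cJ})=m^+(\qK).
\]
Before using this I will check the sign in \eqref{eq:QK-boundary-congruence}. The matrix $N$ represents the form $2p^\top q$. So
\[
 z^\top N z=2p_0^\top q_0
 \quad\text{and}\quad
 (Kz)^\top N(Kz)=2p_h^\top q_h.
\]
Therefore $-\tfrac12 z^\top\qK z=p_h^\top q_h-p_0^\top q_0$, which agrees with \eqref{eq:boundary-form} for $w=y_z$.

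\emph{Step 2: first equality.} Substituting Step 1 into \eqref{eq:boundary-inertia} gives
\[
 k_N=k_D+\nu_D+m^+(\qK),
\]
which is the first equality in \eqref{eq:exact-identity}.

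\emph{Step 3: second equality.} Lemma~\ref{lem:splitting} gives $k=k_N+k_D$. Substituting $k_N=k-k_D$ yields
\[
 k_N-k_D-\nu_D=k-2k_D-\nu_D.
\]

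\emph{Step 4: the inequalities.} Since $m^+(\qK)$ is an inertia index, it is nonnegative. For the upper bound, $m^+(\qK)=k-\nu_D-2k_D\le k-\nu_D$ because $k_D\ge0$. Finally $k-\nu_D\le k$ because $\nu_D\ge0$.

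The only real obstacle is sign bookkeeping in Step 1, namely confirming that the negative index of $\bform$ on $\cJ$ corresponds to the \emph{positive} index of $\qK$. This is fixed by the factor $-\tfrac12$ together with the orientation of the boundary term $[p^\top w]_0^h$. Everything else is substitution, since the substantive analytic content, including the $\nu_D$ term, is already contained in Lemma~\ref{lem:boundary-inertia}.
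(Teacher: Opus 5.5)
Your proposal is correct and follows the paper's proof: the congruence \eqref{eq:QK-boundary-congruence} converts \(m^-(\bform|_{\cJ})\) into \(m^+(\qK)\), Lemma~\ref{lem:boundary-inertia} gives the first equality, and Lemma~\ref{lem:splitting} gives the second. Your explicit sign check and your derivation of the inequalities from \(k_D,\nu_D\geq0\) only spell out steps the paper leaves implicit.
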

\begin{proof}
By \eqref{eq:QK-boundary-congruence},
\(m^-(\bform|_{\cJ})=m^+(\qK)\). Apply
Lemma~\ref{lem:boundary-inertia} and then Lemma~\ref{lem:splitting}.
\end{proof}

To compare with the index-theoretic formulation, use the doubled symplectic
space with form \(\Omega=-\omega\oplus\omega\), and write
\[
 L_N=\{0\}\oplus\R^n,\quad L_D=\R^n\oplus\{0\},\qquad
 \Lambda_N=L_N\oplus L_N,\quad \Lambda_D=L_D\oplus L_D.
\]
For Lagrangians \(\alpha,\beta,\delta\), use the triple form
\(\mathcal Q(\alpha,\beta;\delta)\) on
\(\alpha\cap(\beta+\delta)\), with convention
\[
 \mathcal Q(u,v)=\Omega(u_\beta,v_\delta),\qquad
 u=u_\beta+u_\delta,\quad v=v_\beta+v_\delta.
\]
This is a well-defined symmetric form. The corresponding triple index is
\cite[Lemma 3.13]{ZWZ18}
\begin{equation}\label{eq:triple-index}
 i(\alpha,\beta,\delta)
 =m^+(\mathcal Q(\alpha,\beta;\delta))
   +\dim(\alpha\cap\delta)-\dim(\alpha\cap\beta\cap\delta).
\end{equation}
In the present case \(\Lambda_N\) and \(\Lambda_D\) are complementary.
Under \(z\mapsto(z,Kz)\), direct computation gives
\begin{equation}\label{eq:triple-congruence}
 \mathcal Q(\Graph K,\Lambda_N;\Lambda_D)((z,Kz),(z,Kz))
 =q_0^\top p_0-q_h^\top p_h=\tfrac12z^\top \qK z.
\end{equation}
Also \(\dim(\Graph K\cap\Lambda_D)=\nu_D\). Hence
\eqref{eq:triple-index} gives \(i(\Graph K,\Lambda_N,\Lambda_D)
=m^+(\qK)+\nu_D=k_N-k_D\).  This is precisely the specialization of
\cite[Theorem~1.1]{HWY20} to the complementary Neumann and Dirichlet boundary
spaces.  In particular, our direct calculation is included not to reprove
that general theorem, but to identify its finite-dimensional form with
\(\qK\) and to fix the sign and the factor of two in
\eqref{eq:triple-congruence}.

\section{Spectral bounds for a reversible symplectic matrix}\label{sec:algebra}

The preceding section produced the number \(m^+(\qK)\) from the Morse index.
We now ask what this inertia says about the spectrum of the monodromy.  From
this point until Section~\ref{sec:proofs}, the argument is purely
finite-dimensional.  Several ingredients---reciprocal spectral symmetry of
symplectic matrices and orthogonality of root subspaces for a self-adjoint
operator in an indefinite inner-product space---are standard; see, for
example, \cite[Chapter~1]{Lon02} and \cite[Chapters~2 and~6]{Bog74}.  We give
short proofs in our notation because the particular form \(N(I-M)\) and the
Jordan contribution at \(1\) are central to the quantitative estimates.
Reversibility gives a factorization of \(M\); the form \(\qK=N(I-M)\) then
behaves like an indefinite metric adapted to the spectral decomposition.  The
only delicate sector is the generalized eigenspace at \(1\).

\subsection{Factorization and spectral orthogonality}

We first record the algebraic identities forced by reversibility and show
that generalized spectral subspaces are orthogonal for the symmetric form
\(N\).  This is the mechanism that allows the inertia of \(\qK\) to be
computed sector by sector.

The first-order system associated with \eqref{eq:Jacobi} is
\begin{equation}\label{eq:Hamiltonian-system}
 \dot z=JB(t)z,\qquad
 B(t)=\begin{pmatrix}
 P^{-1}&-P^{-1}Q\\
 -Q^\top P^{-1}&Q^\top P^{-1}Q-S
 \end{pmatrix}=B(t)^\top.
\end{equation}
Thus \(\gamma(t)\) is symplectic.  The reversible half-period factorization
below is standard in the analysis of symmetric periodic and brake orbits;
compare \cite{HPY20,HWY20}.  Reflection of the solutions under
\eqref{eq:coefficient-reflection} shows that evolution over the second half
of the period is \(\Rev K^{-1}\Rev\), and therefore
\begin{equation}\label{eq:factorization}
 M=\Rev K^{-1}\Rev K.
\end{equation}
The remainder of this section is purely algebraic: \(K\) can be any real
symplectic matrix, with \(M\) defined by \eqref{eq:factorization} and
\(\qK\) by \eqref{eq:RN}.

\begin{lem}[Basic identities]\label{lem:basic-identities}
One has
\begin{gather}
 M=NK^\top NK,\qquad NM=M^\top N,\qquad
 \Rev M\Rev=M^{-1},\label{eq:basic-identities}\\
 \qK=N(I-M),\qquad \ker \qK=\ker(I-M).\label{eq:kernel-identity}
\end{gather}
\end{lem}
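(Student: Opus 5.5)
The plan is a direct matrix computation. The only inputs are the symplecticity of \(K\), namely \(K^\top JK=J\), and the reversible factorization \eqref{eq:factorization}. The first step is to record how the three fixed matrices \(J\), \(\Rev\), \(N\) interact. Multiplying the blocks gives
\[
 \Rev J=N,\qquad J\Rev=-N,\qquad \Rev^2=I_{2n},\qquad N^2=I_{2n},\qquad J^{-1}=-J .
\]
Symplecticity then yields the inverse formula \(K^{-1}=J^{-1}K^\top J=-JK^\top J\).

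The second step substitutes this inverse into the second-half evolution:
\[
 \Rev K^{-1}\Rev=-(\Rev J)K^\top(J\Rev)=-N K^\top(-N)=NK^\top N .
\]
Inserting this into \eqref{eq:factorization} gives the first identity, \(M=NK^\top NK\). Since \(N^2=I\), it follows that \(NM=K^\top NK\), which is symmetric. Hence \(NM=(NM)^\top=M^\top N\), using \(N^\top=N\).

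For the involution identity, I use \(\Rev^{-1}=\Rev\) to compute
\[
 M^{-1}=K^{-1}\Rev K\Rev,\qquad \Rev M\Rev=\Rev^2K^{-1}\Rev K\Rev=K^{-1}\Rev K\Rev ,
\]
so the two expressions coincide and \(\Rev M\Rev=M^{-1}\).

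Finally, the relation \(K^\top NK=NM\) already obtained gives \(\qK=N-K^\top NK=N-NM=N(I-M)\). Since \(N\) is invertible, \(\ker\qK=\ker(I-M)\).

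I do not expect any genuine obstacle here. The only point requiring care is the sign bookkeeping in \(\Rev J=N\) versus \(J\Rev=-N\), together with the convention \(K^{-1}=-JK^\top J\). I will therefore verify these block products explicitly with the paper's \(J=\begin{pmatrix}0&-I_n\\ I_n&0\end{pmatrix}\). With the opposite sign choice, \(M\) would acquire a spurious minus sign, and the identification \(\qK=N(I-M)\) would fail.
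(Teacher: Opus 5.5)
Your proof is correct and is essentially the paper's own argument: the identities $K^{-1}=-JK^\top J$, $\Rev J=N$ and $J\Rev=-N$ give $\Rev K^{-1}\Rev=NK^\top N$, so $NM=K^\top NK$ is symmetric, and the remaining identities follow exactly as you derive them. One small correction to your closing remark: replacing $J$ by $-J$ flips the signs of both $\Rev J$ and $J\Rev$ and leaves $K^{-1}=-JK^\top J$ and $\Rev K^{-1}\Rev=NK^\top N$ unchanged, so a spurious minus sign could come only from an inconsistent sign slip, not from the choice of convention for $J$.
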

\begin{proof}
Since \(K^{-1}=-JK^\top J\), \(\Rev J=N\), and \(J\Rev=-N\),
we have \(\Rev K^{-1}\Rev=NK^\top N\). Consequently
\(NM=K^\top NK\) is symmetric. The reversing identity follows directly
from the product \eqref{eq:factorization}. Finally
\(\qK=N-NM=N(I-M)\), and \(N\) is invertible.
\end{proof}

For \(\lambda\in\sigma(M)\), let
\(E_\lambda^{\C}=\ker\bigl((M-\lambda I)^{2n}\bigr)\) in \(\C^{2n}\).
For a conjugation-invariant spectral set \(\Sigma\), let \(E(\Sigma)\)
be the real spectral subspace whose complexification is
\(\bigoplus_{\lambda\in\Sigma}E_\lambda^{\C}\). Thus
\[
 \dim_{\R}E(\Sigma)=\sum_{\lambda\in\Sigma}\dim_{\C}E_\lambda^{\C}.
\]
All inertia computations below are on real spaces. Define
\begin{equation}\label{eq:real-spectral-spaces}
 E_+=E(\sigma(M)\cap(0,\infty)),\quad
 E_1=\ker\bigl((M-I)^{2n}\bigr),\quad E_+^*=E(\sigma(M)\cap((0,\infty)\setminus\{1\})),
\end{equation}
and \(E_{\mathrm{off}+}=E(\sigma(M)\setminus(0,\infty))\). Then
\begin{equation}\label{eq:spectral-sum}
 \R^{2n}=E_{\mathrm{off}+}\oplus E_+^*\oplus E_1,\qquad
 \dim E_+=\nupos,\quad\dim E_+^*=\nustar,\quad\dim E_1=a_1.
\end{equation}

The root-space orthogonality used next is a standard fact for operators that
are self-adjoint with respect to a nondegenerate indefinite inner product; see
for instance \cite[Chapter~2]{Bog74}.  We record the finite-dimensional proof
to cover generalized eigenspaces explicitly.

\begin{lem}[Orthogonality and nondegenerate restrictions]\label{lem:orthogonality}
For the Hermitian extension \([u,v]_N=\bar u^\top Nv\),
\[
 [E_\lambda^{\C},E_\mu^{\C}]_N=0\quad\text{if }\bar\lambda\ne\mu.
\]
Consequently the decomposition \eqref{eq:spectral-sum} is orthogonal for
both \(N\) and \(\qK\), and the restriction of \(N\) to each summand
is nondegenerate.
\end{lem}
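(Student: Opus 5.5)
The proof has three steps: an orthogonality relation for generalized eigenvectors, its passage to real spectral subspaces for \(N\) and \(\qK\), and nondegeneracy of \(N\) on each summand.

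\textbf{Step 1: orthogonality of root spaces.} The key relation is \(NM=M^\top N\) from Lemma~\ref{lem:basic-identities}. Since \(M\) is real, it says that \(M\) is self-adjoint for the Hermitian form \([\cdot,\cdot]_N\):
\[
[Mu,v]_N=[u,Mv]_N\qquad\text{for all }u,v.
\]
Let \(\bar\lambda\ne\mu\). I will prove \([u,v]_N=0\) for \(u\in\ker(M-\lambda I)^j\) and \(v\in\ker(M-\mu I)^l\) by induction on \(j+l\).

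For the inductive step, self-adjointness gives
\[
[(M-\lambda I)u,v]_N=[Mu,v]_N-\bar\lambda[u,v]_N=[u,Mv]_N-\bar\lambda[u,v]_N,
\]
and hence
\[
(\mu-\bar\lambda)[u,v]_N=[(M-\lambda I)u,v]_N-[u,(M-\mu I)v]_N .
\]
Both terms on the right have a smaller total order, so they vanish by the induction hypothesis. Since \(\mu\ne\bar\lambda\), this gives \([u,v]_N=0\).

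\textbf{Step 2: orthogonality of the real summands.} The three summands \(E_{\mathrm{off}+},E_+^*,E_1\) correspond to disjoint, conjugation-invariant spectral sets \(\Sigma_1,\Sigma_2,\Sigma_3\). Take \(\lambda\in\Sigma_i\) and \(\mu\in\Sigma_j\) with \(i\ne j\). Then \(\bar\lambda\in\Sigma_i\), so \(\bar\lambda\ne\mu\). By Step 1 the complexifications of the summands are pairwise \([\cdot,\cdot]_N\)-orthogonal. Restricting to real vectors, the decomposition \eqref{eq:spectral-sum} is orthogonal for \(N\).

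For \(\qK\), use \(\qK=N(I-M)\) and the fact that each summand is \(M\)-invariant. If \(u\) lies in one summand and \(v\) in another, then \((I-M)v\) lies in the same summand as \(v\). Therefore
\[
u^\top\qK v=u^\top N(I-M)v=0 .
\]

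\textbf{Step 3: nondegeneracy of \(N\) on each summand.} \(N\) is invertible, so it is nondegenerate on \(\R^{2n}\). Suppose \(u\) lies in a summand \(E\) and \(u^\top Nw=0\) for every \(w\in E\). By Step 2, \(u\) is also \(N\)-orthogonal to the other two summands. Since the summands span \(\R^{2n}\), \(u\) is \(N\)-orthogonal to everything, so \(Nu=0\) and \(u=0\).

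The only delicate point is the bookkeeping of the conjugate \(\bar\lambda\) in the Hermitian form. This is exactly why the spectral sets must be conjugation-invariant: only then do \(\lambda\) and \(\bar\lambda\) always fall in the same real summand. Everything else is routine.
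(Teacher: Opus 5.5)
Your proof is correct and follows the same overall scheme as the paper. Both arguments get self-adjointness of $M$ for $[\cdot,\cdot]_N$ from $NM=M^\top N$, use conjugation-invariance of the three disjoint spectral sets to pass to the real summands, use $M$-invariance of the summands for $\qK=N(I-M)$, and use invertibility of $N$ to exclude a radical on any summand.

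The only variation is in Step~1. You obtain root-space orthogonality by induction on $j+l$ from the identity $(\mu-\bar\lambda)[u,v]_N=[(M-\lambda I)u,v]_N-[u,(M-\mu I)v]_N$; the base case is trivial, since $u=0$ or $v=0$ when $j=0$ or $l=0$. The paper instead writes $v=(M-\bar\lambda I)^r w$, using that $(M-\bar\lambda I)^r$ is invertible on $E_\mu^{\C}$, and moves the whole power across in one step. Both routes are standard and equally short.
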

\begin{proof}
The identity \(NM=M^\top N\) makes \(M\) self-adjoint for
\([\cdot,\cdot]_N\). If \((M-\lambda I)^ru=0\) and
\(\bar\lambda\ne\mu\), the operator \((M-\bar\lambda I)^r\)
is invertible on \(E_\mu^{\C}\). Write
\(v=(M-\bar\lambda I)^rw\) there. Then
\[
 [u,v]_N=[(M-\lambda I)^ru,w]_N=0.
\]
The spectral sets in \eqref{eq:spectral-sum} are disjoint and closed under
conjugation, giving real \(N\)-orthogonality. If the restriction to one
summand had a radical, that radical would be orthogonal to the whole space,
contradicting invertibility of \(N\). Finally \(I-M\) preserves every
summand, so \(\qK=N(I-M)\) gives the corresponding orthogonality for
\(\qK\).
\end{proof}

The last nondegeneracy assertion is needed: in general, a subspace may avoid
the kernel of a symmetric matrix while the restricted form is degenerate.
Here nondegeneracy follows from the orthogonal spectral decomposition.

\subsection{Balanced inertia outside the positive real axis}

Away from the positive real axis, the form \(\qK\) cannot become singular
along the homotopy \(N(I-tM)\).  Reversibility then forces its positive and
negative indices to balance.  The underlying reciprocal and indefinite-metric
spectral symmetries are standard (see \cite{Bog74,Lon02}); the concise homotopy
formulation below is included because it packages all spectral types in the
exact form needed for our counting argument.

\begin{lem}[Balanced complementary sector]\label{lem:balanced}
The restriction of \(\qK\) to \(E_{\mathrm{off}+}\) is nondegenerate and
\begin{equation}\label{eq:balanced}
 m^+(\qK|_{E_{\mathrm{off}+}})
 =m^-(\qK|_{E_{\mathrm{off}+}})=\tfrac12\dim E_{\mathrm{off}+}.
\end{equation}
\end{lem}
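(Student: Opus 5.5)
Restrict everything to $V:=E_{\mathrm{off}+}$, write $M_V=M|_V$ and $N_V=N|_V$, and run the homotopy $t\mapsto N_V(I-tM_V)$, $t\in[0,1]$, on this fixed real space.

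\textbf{Step 1 (endpoints).} By Lemma~\ref{lem:orthogonality}, $V$ is $M$-invariant and $N|_V$ is nondegenerate. Since $NM=M^\top N$, each form $N(I-tM)$ is symmetric, so its restriction to $V$ is a continuous path of symmetric bilinear forms. At $t=1$ this path equals $\qK|_V$, using $\qK=N(I-M)$ from Lemma~\ref{lem:basic-identities}.

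\textbf{Step 2 (nondegeneracy along the path).} Suppose $u\in V$ lies in the radical of the restricted form at some $t$. Because $N_V$ is nondegenerate and $M_V$ is $N$-self-adjoint, this forces $(I-tM)u=0$. For $t=0$ that gives $u=0$. For $t\in(0,1]$ it makes $1/t\in(0,\infty)$ an eigenvalue of $M_V$, which is impossible since $\sigma(M_V)\cap(0,\infty)=\emptyset$. So the forms are nondegenerate for all $t\in[0,1]$. Their inertia is therefore constant, and $\qK|_V$ has the same inertia as $N|_V$. This also gives the nondegeneracy claim at $t=1$.

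\textbf{Step 3 (balanced inertia of $N|_V$).} It remains to show $m^+(N|_V)=m^-(N|_V)=\tfrac12\dim V$. I would use the reversing involution $\Rev$. One computes $\Rev N\Rev=-N$, and $\Rev M\Rev=M^{-1}$ by \eqref{eq:basic-identities}. Inversion $\lambda\mapsto\lambda^{-1}$ maps $\sigma(M)\setminus(0,\infty)$ to itself, so $\Rev$ preserves $V$. Then $\Rev|_V$ is an isomorphism that carries $N|_V$ to $-N|_V$. Hence $m^+(N|_V)=m^-(N|_V)$, and nondegeneracy makes each equal $\tfrac12\dim V$.

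\textbf{Main obstacle.} The delicate point is Step 2: the radical of a restricted form must be taken inside $V$, not merely intersected with $\ker$ of the ambient form. This is handled by the $N$-orthogonality of the spectral decomposition in Lemma~\ref{lem:orthogonality}. Write $N(I-tM)u$ paired with all $w\in V$. The vector $N(I-tM)u$ is also $N$-orthogonal to the complementary summands, since $(I-tM)u\in V$. It therefore vanishes against all of $\R^{2n}$, and invertibility of $N$ gives $(I-tM)u=0$.
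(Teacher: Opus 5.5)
Your proof is correct and follows the paper's argument essentially verbatim. You use the homotopy $N(I-tM)|_{E_{\mathrm{off}+}}$, kept nondegenerate by the absence of positive real spectrum together with Lemma~\ref{lem:orthogonality}, and then the $\Rev$-congruence between $N$ and $-N$ on the reciprocation-invariant sector. Your Step~2 simply writes out the radical computation that the paper compresses into a single appeal to Lemma~\ref{lem:orthogonality}.
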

\begin{proof}
Consider the real symmetric forms
\[
 Q_t=N(I-tM)|_{E_{\mathrm{off}+}},\qquad 0\leq t\leq1.
\]
Since this spectral subspace contains no positive real eigenvalue,
\(I-tM\) is invertible on it for every \(t\in[0,1]\).
Lemma~\ref{lem:orthogonality} then implies that each restricted form
\(Q_t\) is nondegenerate, so its inertia is constant.

The reversing identity maps each generalized eigenspace for \(\lambda\)
to that for \(\lambda^{-1}\). The set defining \(E_{\mathrm{off}+}\)
is invariant under reciprocation, hence this real subspace is
\(\Rev\)-invariant. Since \(\Rev^\top N\Rev=-N\), the restricted forms
\(N\) and \(-N\) are congruent. Nondegeneracy therefore forces equal
positive and negative indices. The same inertia holds at \(t=1\).
\end{proof}

This single homotopy treats negative real eigenvalues, \(-1\), non-real
unit-circle pairs, and complex quadruples, with all their Jordan blocks.
Equivalently, the path \(Q_t\) on this sector has no zero crossing and zero
spectral flow.  This is an elementary indefinite-metric reformulation of the
spectral separation underlying earlier reversible stability arguments such as
\cite{Ure18,HWY20}; separate normal-form arguments for the individual spectral
types are unnecessary here.

\subsection{The generalized eigenspace at one}

The multiplier \(1\) requires separate treatment because it is exactly the
kernel of \(\qK\).  Jordan chains at \(1\) may be arbitrarily long, so a
semisimple argument would lose information.  We instead build a large
isotropic subspace from the nilpotent part of \(M-I\), which relates algebraic
multiplicity to the geometric nullity.

Only \(E_1\) can carry a radical of \(\qK\). On this space set
\(A=(M-I)|_{E_1}\). Then \(A\) is nilpotent,
\(NA=A^\top N\), and \(\qK|_{E_1}=-NA\).

\begin{lem}[An isotropic subspace at one]\label{lem:folding}
The real subspace
\begin{equation}\label{eq:folding-space}
 V_1=\sum_{j\geq0}A^j(\ker A^{2j+1})\subset E_1
\end{equation}
is \(\qK\)-isotropic. If the Jordan block lengths of \(A\) are
\(\ell_1,\ldots,\ell_{\nu_1}\), then
\begin{equation}\label{eq:folding-dimension}
 \dim V_1=\sum_{r=1}^{\nu_1}\left\lceil\frac{\ell_r}{2}\right\rceil
 \geq\frac{a_1}{2}.
\end{equation}
\end{lem}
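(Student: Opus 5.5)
Two facts need proof: isotropy of $V_1$ for $\qK|_{E_1}=-NA$, and the dimension count. The only structural input is the $N$-self-adjointness $NA=A^\top N$ from Lemma~\ref{lem:basic-identities}, i.e.\ $u^\top NAv=(Au)^\top Nv$ for $u,v\in E_1$.

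\emph{Isotropy.} Take generators $u=A^i x$ with $A^{2i+1}x=0$ and $v=A^j y$ with $A^{2j+1}y=0$. Moving powers of $A$ across $N$ by self-adjointness gives
\[
 \qK(u,v)=-u^\top NAv=-x^\top N A^{i+j+1}y .
\]
Suppose without loss of generality $i\le j$. Then $i+j+1\geq 2i+1$, so we may instead move the whole power onto $x$: $x^\top NA^{i+j+1}y=(A^{i+j+1}x)^\top N y=0$, since $A^{2i+1}x=0$ and hence $A^{i+j+1}x=0$. Bilinearity extends this to all of $V_1$, so $V_1$ is $\qK$-isotropic.

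\emph{Dimension.} Fix a real Jordan basis for the nilpotent $A$ on $E_1$. There are $\nu_1$ chains, since the number of blocks equals $\dim\ker A=\dim\ker(M-I)$. Write the $r$-th chain as $e^r_1,\dots,e^r_{\ell_r}$ with $Ae^r_s=e^r_{s+1}$ and $Ae^r_{\ell_r}=0$. Then $\ker A^m$ is spanned by the last $\min(m,\ell_r)$ vectors of each chain, and applying $A^j$ shifts indices by $j$. So $A^j(\ker A^{2j+1})$ is spanned, chain by chain, by the basis vectors $e^r_s$ with $s\geq \ell_r-2j$ and $s\geq j+1$ (and $s\le\ell_r$). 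These are coordinate subspaces of the Jordan basis, so their sum is the coordinate subspace spanned by the union of these index sets over $j\ge0$.

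For a chain of length $\ell$, $e_s$ lies in the union exactly when some $j$ satisfies $j\le s-1$ and $2j\ge \ell-s$. Such a $j$ exists iff $2(s-1)\geq \ell-s$, i.e.\ $s\geq (\ell+2)/3$. This gives $\ell-\lceil(\ell+2)/3\rceil+1$ vectors per chain, which is generally not $\lceil\ell/2\rceil$. For example, $\ell=3$ gives $2=\lceil 3/2\rceil$, but $\ell=4$ gives $3>2$.

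That contradicts isotropy, since an isotropic subspace of the nondegenerate form $N|_{E_1}$ has dimension at most $a_1/2$. So the indexing convention must be re-examined, and this reconciliation is the step I expect to be the main obstacle. On the single chain with $\ell=4$, the element $A^1\ker A^3$ contains $e_2=A e_1$? No: $\ker A^3=\mathrm{span}(e_2,e_3,e_4)$, so $A\ker A^3=\mathrm{span}(e_3,e_4)$, and $A^0\ker A^1=\mathrm{span}(e_4)$. So the first count was too generous, and I recount correctly. $A^j\ker A^{2j+1}$ is spanned by $e_s$ with $s\ge \ell-2j+j=\ell-j$ and $s\ge j+1$, i.e.\ $s\geq\max(\ell-j,j+1)$. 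The union over $j$ is attained at $j=\lfloor(\ell-1)/2\rfloor$, giving $s\geq\ell-\lfloor(\ell-1)/2\rfloor$. That is $\lfloor(\ell-1)/2\rfloor+1=\lceil\ell/2\rceil$ vectors.

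Summing over chains gives $\dim V_1=\sum_r\lceil\ell_r/2\rceil\geq\frac12\sum_r\ell_r=a_1/2$, as claimed in \eqref{eq:folding-dimension}. The basis-index bookkeeping in this step is the only delicate point. The isotropy step is a one-line exponent comparison and, being basis-free, holds as stated regardless of the basis chosen for the count.
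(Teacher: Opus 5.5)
Your proof is correct and is essentially the paper's: isotropy comes from the $N$-self-adjointness of $A$ plus the exponent comparison $i+j+1\ge 2\min(i,j)+1$, and the count comes from reading $A^j(\ker A^{2j+1})$ off each Jordan chain; you only order the chains the other way round. Drop the discarded first count and its ``contradiction'' remark, though: $V_1$ is $\qK$-isotropic, not $N$-isotropic, and $\qK|_{E_1}$ has radical $\ker A$ of dimension $\nu_1$, so the relevant bound is $\min\bigl(m^+(\qK|_{E_1}),m^-(\qK|_{E_1})\bigr)+\nu_1$ rather than $a_1/2$ (indeed $\dim V_1>a_1/2$ whenever some $\ell_r$ is odd, e.g.\ $V_1=E_1$ when $A=0$).
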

\begin{proof}
Take \(u=A^ix\), \(x\in\ker A^{2i+1}\), and
\(v=A^jy\), \(y\in\ker A^{2j+1}\). Self-adjointness of \(A\) for
\([\cdot,\cdot]_N\) gives
\[
 u^\top \qK v=-[A^ix,A^{j+1}y]_N
             =-[x,A^{i+j+1}y]_N=-[A^{i+j+1}x,y]_N.
\]
If \(i\geq j\), the first expression on the right vanishes; if
\(j\geq i\), the second does. Bilinearity proves isotropy of the sum.

On a Jordan chain \(e_1,\ldots,e_\ell\), with \(Ae_1=0\) and
\(Ae_r=e_{r-1}\), the contribution to \eqref{eq:folding-space} is
\[
 \operatorname{span}(e_1,\ldots,e_{\lceil\ell/2\rceil}).
\]
Taking the direct sum over all chains gives the dimension formula.
There are \(\nu_1\) chains, since \(\nu_1=\dim\ker A\).
\end{proof}

For a real symmetric form of inertia \((p,q,r)\), every isotropic subspace
has dimension at most \(\min(p,q)+r\): quotient by the radical and project
onto either definite factor. By Lemma~\ref{lem:orthogonality} and
\eqref{eq:kernel-identity}, the radical of \(\qK|_{E_1}\) is exactly
\(\ker(M-I)\), of dimension \(\nu_1\). Hence
Lemma~\ref{lem:folding} implies
\begin{equation}\label{eq:one-sector-bound}
 a_1\leq2m^+(\qK|_{E_1})+2\nu_1.
\end{equation}
This estimate is the step that allows a geometric nullity to control an
algebraic multiplicity without assuming semisimplicity.

The next statement is the main finite-dimensional stability estimate used in
this paper.  It should be distinguished from the previously known brake-orbit
bounds of Hu, Wu and Yang, which control geometric dimensions of spectral
subspaces \cite[Theorems~1.8--1.9]{HWY20}, and from Ure\~na's minimizer result
\cite[Theorem~1.1]{Ure18}.  Here the count is made with \emph{algebraic}
multiplicity, the positive real sector is singled out, and arbitrary Jordan
chains at \(1\) are retained.  This proposition, together with the known
boundary-index identity recalled above, is what yields
Theorems~\ref{thm:non-autonomous} and \ref{thm:autonomous}.

\begin{prop}[Universal spectral estimates]\label{prop:universal}
For every factorization \eqref{eq:factorization}, put \(p=m^+(\qK)\).
Then
\begin{equation}\label{eq:universal}
 \nupos\geq2n-2p,\qquad
 \nustar\geq2n-2p-2\nu_1,\qquad
 \nustar\geq2n-2p-a_1.
\end{equation}
In particular, the algebraic number of negative real multipliers is at most
\(2p\). If \(p=0\), the whole spectrum is positive real, including when
\(1\) is an eigenvalue.
\end{prop}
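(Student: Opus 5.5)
The plan is to compute \(p=m^+(\qK)\) sector by sector, using the \(\qK\)-orthogonal decomposition \eqref{eq:spectral-sum} from Lemma~\ref{lem:orthogonality}. Inertia is additive over an orthogonal direct sum, so
\[
 p=m^+(\qK|_{E_{\mathrm{off}+}})+m^+(\qK|_{E_+^*})+m^+(\qK|_{E_1})
 \geq \tfrac12\dim E_{\mathrm{off}+}+m^+(\qK|_{E_1}),
\]
where the first term has been evaluated by Lemma~\ref{lem:balanced} and the middle term simply dropped as nonnegative. Since \(\dim E_{\mathrm{off}+}=\nubad=2n-\nupos\), this gives \(\nupos\geq 2n-2p\) at once, which is the first inequality of \eqref{eq:universal}.

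For the two bounds on \(\nustar\), the same decomposition is used but the \(E_1\) contribution is kept. By \eqref{eq:one-sector-bound}, \(m^+(\qK|_{E_1})\geq a_1/2-\nu_1\). Combining this with \(p\geq \tfrac12(2n-\nustar-a_1)+m^+(\qK|_{E_1})\) yields
\[
 2p\geq 2n-\nustar-a_1+a_1-2\nu_1=2n-\nustar-2\nu_1,
\]
which is the second bound. The third bound needs only \(m^+(\qK|_{E_1})\geq0\): then \(2p\geq 2n-\nustar-a_1\). Equivalently, it follows from the first bound and \(\nustar=\nupos-a_1\).

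The remaining assertions are immediate. Negative real multipliers lie in \(E_{\mathrm{off}+}\), so their algebraic number is at most \(\dim E_{\mathrm{off}+}\leq 2p\). If \(p=0\), then \(E_{\mathrm{off}+}=0\), so the whole spectrum is positive real. Here \(E_1\) is included, because the argument never excluded the eigenvalue \(1\).

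I expect no real obstacle at this point; the substantive work has already been done in Lemmas~\ref{lem:balanced} and~\ref{lem:folding}. The only step that needs care is the additivity of \(m^+\). This requires \(\qK\)-orthogonality of the three summands, which Lemma~\ref{lem:orthogonality} provides, and it holds even though \(\qK\) is degenerate on \(E_1\): the positive index of an orthogonal sum of possibly degenerate forms is still the sum of the positive indices.
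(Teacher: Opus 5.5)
Your proposal is correct and follows essentially the same route as the paper. Both arguments use additivity of $m^+$ over the $\qK$-orthogonal spectral decomposition, the balanced inertia on $E_{\mathrm{off}+}$ from Lemma~\ref{lem:balanced}, and the one-sector bound \eqref{eq:one-sector-bound} on $E_1$. Your explicit remark that positive indices add over an orthogonal sum even when a summand is degenerate spells out the step the paper leaves implicit when it bounds $m^+(\qK|_{E_{\mathrm{off}+}})+m^+(\qK|_{E_1})$ by $p$.
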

\begin{proof}
By Lemma~\ref{lem:balanced},
\[
 \nupos=2n-\dim E_{\mathrm{off}+}
 =2n-2m^+(\qK|_{E_{\mathrm{off}+}})\geq2n-2p.
\]
For the second estimate combine \eqref{eq:one-sector-bound} with
orthogonality of the spectral decomposition:
\begin{align*}
 \nustar&=2n-\dim E_{\mathrm{off}+}-a_1\\
 &\geq2n-2m^+(\qK|_{E_{\mathrm{off}+}})
           -2m^+(\qK|_{E_1})-2\nu_1\\
 &\geq2n-2p-2\nu_1.
\end{align*}
The third estimate follows from \(\nustar=\nupos-a_1\).
Negative real multipliers are contained in the complementary sector, whose
dimension is at most \(2p\). If \(p=0\), that sector is zero.
\end{proof}

\section{Proofs and instability consequences}\label{sec:proofs}

All ingredients are now in place.  The variational section has recalled and
specialized the known boundary-index comparison to identify \(\widetilde k\)
with \(m^+(\qK)\), while the algebraic section converts that inertia into the
spectral counts that are the main contribution of the paper.  We now combine
these two ingredients and then spell out the resulting stability consequences.

\begin{proof}[Proof of Theorem~\ref{thm:non-autonomous}]
Proposition~\ref{prop:exact-identity} identifies
\(\widetilde k=m^+(\qK)\).  Proposition~\ref{prop:universal} therefore gives
the first inequalities in \eqref{eq:main-positive-chain} and
\eqref{eq:main-star-chain}, as well as \eqref{eq:main-algebraic}.  Expanding
the definition of \(\widetilde k\) gives the equalities in the chains, and
\(k_D,\nu_D\geq0\) gives the final inequalities.
\end{proof}

\begin{proof}[Proof of Theorem~\ref{thm:autonomous}]
As standard in autonomous variational theory (and used, for example, in
\cite{PWY22,AHPW26}), time translation preserves the Euler--Lagrange equation;
differentiation of the family \(x(t+s)\) at \(s=0\) therefore gives the Jacobi
field \(y=\dot x\). Symmetry yields \(\dot x(0)=\dot x(h)=0\), hence
\(y\in\ker(\bform|_{H_D})\). It is nonzero on \([0,h]\), because
otherwise reflection would make \(x\) constant on the whole period.
Therefore \(\nu_D\geq1\). The exact identity gives
\[
 0\leq\widetilde k=k-2k_D-\nu_D\leq k-1,
\]
so \(k\geq1\), and \eqref{eq:autonomous-positive-chain}--\eqref{eq:autonomous-star-chain} follows from
\eqref{eq:universal}. If \(a_1=2\), subtracting its contribution from the
first bound gives \eqref{eq:autonomous-a1-two}.
\end{proof}

The inequality \(k\geq1\) in this Euclidean reversible setting is consistent
with the non-minimality results of Asselle, Hu, Portaluri and Wu
\cite{AHPW26}, where natural Lagrangians on Riemannian manifolds and other
boundary conditions are treated. The present argument concerns the periodic
fixed-time index and derives the additional spectral counts from the
half-period boundary form.

\begin{cor}[Positive real expanding directions]\label{cor:growing}
Let \(\nugrow\) be the algebraic number of multipliers in \((1,\infty)\).
Then
\begin{equation}\label{eq:growing}
 \nugrow=\tfrac12\nustar\geq n-\widetilde k-\nu_1.
\end{equation}
In particular, the orbit is spectrally unstable if \(k+\nu_1<n\), or,
in the nonconstant autonomous case, if \(k+\nu_1<n+1\).
\end{cor}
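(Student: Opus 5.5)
The argument has two steps. First we show that the positive real spectrum away from $1$ is symmetric under $\lambda\mapsto\lambda^{-1}$, with algebraic multiplicities. This gives $\nugrow=\tfrac12\nustar$, and the bound then follows from \eqref{eq:main-star-chain}.

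\textbf{Reciprocal symmetry.} By Lemma~\ref{lem:basic-identities}, $\Rev M\Rev=M^{-1}$, so $M$ is similar to $M^{-1}$. Hence for every $\lambda\in\sigma(M)$ the value $\lambda^{-1}$ is also an eigenvalue, with the same Jordan structure. In particular
\[
 \operatorname{algmult}(\lambda)=\operatorname{algmult}(\lambda^{-1}).
\]
Symplecticity of $M$ would give the same conclusion. The map $\lambda\mapsto\lambda^{-1}$ is a bijection from $(0,1)$ onto $(1,\infty)$. Therefore the algebraic count on $(0,\infty)\setminus\{1\}$ splits into two equal halves, and $\nugrow=\tfrac12\nustar$.

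\textbf{The bound.} Dividing the first inequality of \eqref{eq:main-star-chain} by two gives
\[
 \nugrow\ \geq\ n-\widetilde k-\nu_1 .
\]

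\textbf{Instability criteria.} Since $\widetilde k\le k$ by Theorem~\ref{thm:non-autonomous}, the condition $k+\nu_1<n$ gives $\nugrow\geq n-k-\nu_1\geq1$. So there is a multiplier $\lambda>1$, and the orbit is spectrally unstable. In the nonconstant autonomous case, Theorem~\ref{thm:autonomous} gives $\nu_D\geq1$, hence $\widetilde k\le k-1$. Then $\nugrow\geq n+1-k-\nu_1$, which is positive exactly when $k+\nu_1<n+1$.

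The main care is in the first step: the halving needs algebraic multiplicities to agree under reciprocation, not merely the eigenvalue sets. The conjugation by $\Rev$ supplies this directly. Everything else is substitution of earlier results.
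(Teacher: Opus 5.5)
Your proof is correct and follows essentially the same route as the paper. Reciprocal pairing of multipliers with algebraic multiplicity gives $\nugrow=\tfrac12\nustar$, halving the first inequality of \eqref{eq:main-star-chain} gives the bound, and $\widetilde k\le k$ (resp.\ $\widetilde k\le k-1$ from $\nu_D\ge1$) yields the two instability criteria. The only difference is cosmetic: you obtain the pairing from the reversing conjugation $\Rev M\Rev=M^{-1}$ of Lemma~\ref{lem:basic-identities} (with $\Rev^2=I$), whereas the paper cites the standard reciprocal pairing for real symplectic matrices; both give the needed equality of algebraic multiplicities.
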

\begin{proof}
The reciprocal pairing of eigenvalues of a real symplectic matrix, with
preservation of algebraic multiplicity, is standard; see
\cite[Chapter~1]{Lon02}.  Thus every positive real multiplier \(\lambda>1\)
is paired with \(\lambda^{-1}\in(0,1)\). Apply
Theorem~\ref{thm:non-autonomous} and, in the autonomous case,
\(\widetilde k\leq k-1\). A multiplier of modulus greater than one is an
exponentially growing Floquet direction.
\end{proof}

We call an orbit spectrally stable when \(\sigma(M)\subset\{z:|z|=1\}\).
Bounded linearized dynamics additionally requires semisimplicity on the unit
circle. The preceding corollary asserts spectral instability; it does not
identify the total unstable dimension, since negative or non-real unstable
multipliers can provide further growing directions.

Hyperbolicity criteria for reversible Lagrangian systems were previously
obtained by Hu, Portaluri and Yang under positivity assumptions on the
symmetry-reduced Hessian; see \cite[Theorem~4 and Corollary~2]{HPY20}.  The
next consequence has a different hypothesis and conclusion: it uses the full
fixed-period Morse index together with the algebraic multiplicity at $1$, and
it determines the number of stable and unstable transverse directions in the
index-one case.

\begin{cor}[Transverse hyperbolicity at index one]\label{cor:hyperbolicity}
Let \(n\geq2\), and let \(x\) be a nonconstant autonomous brake orbit
with \(a_1=2\). Then
\[
 \nugrow\geq n-k.
\]
Consequently spectral stability requires \(k\geq n\). If \(k=1\), all
\(2n-2\) transverse multipliers are positive real and different from
\(1\); the Poincar\'e return map on a fixed-energy transverse section is
hyperbolic, with \(n-1\) stable and \(n-1\) unstable directions.
\end{cor}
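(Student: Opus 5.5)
The plan is to combine Theorem~\ref{thm:autonomous} with the reciprocal pairing used in Corollary~\ref{cor:growing}. Under \(a_1=2\), estimate \eqref{eq:autonomous-a1-two} gives \(\nustar\geq2n-2k\). Multipliers in \((0,\infty)\setminus\{1\}\) come in reciprocal pairs \(\lambda,\lambda^{-1}\) with equal algebraic multiplicity. Hence \(\nugrow=\tfrac12\nustar\geq n-k\).

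For the stability statement: spectral stability means \(\sigma(M)\) lies on the unit circle, which forces \(\nugrow=0\). Then \(n-k\le0\), i.e.\ \(k\geq n\).

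For the index-one case, put \(k=1\) and first use the sharper chain with \(\widetilde k\) rather than \(k\). Theorem~\ref{thm:autonomous} gives \(\nu_D\geq1\) and \(0\le\widetilde k\le k-1=0\), so \(\widetilde k=0\). Proposition~\ref{prop:universal} with \(p=m^+(\qK)=0\) then says the whole spectrum is positive real, \(\nupos=2n\). Removing the generalized eigenspace at \(1\), which has dimension \(a_1=2\), leaves \(\nustar=2n-2\) transverse multipliers. These are positive real and different from \(1\), and they split into reciprocal pairs, giving \(n-1\) multipliers in \((1,\infty)\) and \(n-1\) in \((0,1)\). The hypothesis \(n\ge2\) only ensures this transverse count is nonzero, so that the hyperbolicity statement has content.

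The main obstacle is translating this into a statement about the Poincar\'e map on a fixed-energy section, and I would use the standard autonomous reduction. Because the system is autonomous, \(\dot x\) gives an eigenvector of \(M\) with eigenvalue \(1\). The energy \(E\) is conserved, so \(dE\) along the orbit is a left eigenvector for eigenvalue \(1\). Together these span the generalized eigenspace \(E_1\), which the hypothesis \(a_1=2\) says is exactly two-dimensional; this is the usual \(2\times2\) Jordan block from varying the energy and the period. By Lemma~\ref{lem:orthogonality}, \(E_1\) is \(N\)-nondegenerate. Its symplectic complement is \(E_{\mathrm{off}+}\oplus E_+^*=E_+^*\), which is \(M\)-invariant and symplectic, and the linearized Poincar\'e map on a transverse section inside the energy level is conjugate to \(M|_{E_+^*}\). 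I would argue this by noting that the Poincar\'e map is the symplectic quotient \(dE^{-1}(0)/\R\dot x\), and that this quotient carries the multipliers of \(M\) with the two eigenvalues \(1\) removed. Its spectrum therefore consists of the \(n-1\) pairs computed above, none on the unit circle. That gives hyperbolicity with \(n-1\) stable and \(n-1\) unstable directions.
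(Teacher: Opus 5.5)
Your proposal is correct and follows essentially the paper's route. It uses \eqref{eq:autonomous-a1-two} with reciprocal pairing. At $k=1$ it uses that the positive-real count exhausts the $2n-2$ multipliers outside $E_1$; your detour through $\widetilde k=0$ and Proposition~\ref{prop:universal} is the same estimate. It then reduces the fixed-energy Poincar\'e map to the $M$-invariant hyperplane $\ker dE$ modulo the fixed vector from $\dot x$, whose characteristic polynomial is that of $M$ divided by $(\lambda-1)^2$.

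Some wording needs fixing, without affecting the conclusion. $dE$ is a covector, so it does not ``span'' $E_1$ together with $\dot x$, and with $a_1=2$ the block at $1$ may be semisimple rather than a Jordan block. Also, going from the $N$-orthogonality of Lemma~\ref{lem:orthogonality} to the $\omega$-complement needs either symplectic root-space orthogonality or the remark $N=\Rev J$ with $\Rev E_1=E_1$.
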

\begin{proof}
The first assertion follows from \eqref{eq:autonomous-a1-two} and reciprocal
pairing. For \(k=1\), that estimate exhausts all \(2n-2\) multipliers
outside the algebraic \(1\)-eigenspace. A nonconstant autonomous orbit lies
on a regular energy hypersurface: otherwise its Hamiltonian vector field
would vanish at some point and uniqueness would make it an equilibrium.
The standard relation between monodromy and the fixed-energy Poincar\'e map
removes the two trivial factors \((\lambda-1)\) associated with the flow and
energy directions; see, e.g., \cite[Chapter~1]{Lon02}.  Under
\(a_1=2\), its spectrum is exactly the positive real spectrum different
from \(1\). Reciprocal pairing gives the two dimensions claimed.
\end{proof}

The hypothesis \(a_1=2\) allows either a nontrivial two-dimensional Jordan
block or two semisimple trivial multipliers. Thus this corollary does not
require the full monodromy matrix to be semisimple. An index-one orbit with
additional symmetry-generated multipliers at \(1\) must instead be
assessed with the explicit nullity terms in the general estimates.

\section{Explicit examples and sharpness of the refined index}\label{sec:examples}

The purpose of this section is to make every quantity entering the main
estimates completely explicit.  Constant-coefficient oscillators are already
sufficient to exhibit the role of the half-period correction.  Although these
models are elementary, they are useful for two reasons.  First, the periodic,
Dirichlet and Neumann spectra can be written down exactly, so that the identity
\(m^+(\qK)=k-2k_D-\nu_D\) can be checked directly.  Second, products of scalar
oscillators show that the refined estimate can be sharp even when the coarser
bound involving only the full-period Morse index is entirely vacuous.

\subsection{A scalar elliptic oscillator}

Fix \(T>0\) and \(\omega>0\), and consider
\begin{equation}\label{eq:osc-L}
 L(q,v)=\frac12v^2-\frac12\omega^2q^2.
\end{equation}
The Jacobi form is
\[
 \bform_I(y,y)=\int_I\bigl(|\dot y|^2-\omega^2|y|^2\bigr)\,dt,
\]
and the Jacobi equation is \(\ddot y+\omega^2y=0\).  Put
\[
 h=\frac T2,\qquad r:=\frac{\omega T}{2\pi}.
\]
On the periodic interval \([0,T]\), the eigenvalues of
\(-d^2/dt^2-\omega^2\) are
\[
 -\omega^2\quad\text{and}\quad
 \left(\frac{2\pi m}{T}\right)^2-\omega^2,
 \qquad m\ge1,
\]
where the latter have multiplicity two.  Consequently, if \(r\notin\mathbb N\),
\begin{equation}\label{eq:osc-k-nonres}
 k=1+2\lfloor r\rfloor,
\end{equation}
whereas, if \(r=m\in\mathbb N\), the mode of frequency \(m\) is null and
\begin{equation}\label{eq:osc-k-res}
 k=2m-1,\qquad \nu_1=2.
\end{equation}
Here and below \(\nu_1\) agrees with the dimension of the periodic Jacobi
kernel.

On \([0,h]\) with Dirichlet boundary conditions, the eigenfunctions are
\(\sin(j\pi t/h)\), and the eigenvalues are
\[
 \left(\frac{j\pi}{h}\right)^2-\omega^2
 =\left(\frac{2\pi j}{T}\right)^2-\omega^2,
 \qquad j\ge1.
\]
Therefore
\begin{equation}\label{eq:osc-D}
 (k_D,\nu_D)=
 \begin{cases}
 (\lfloor r\rfloor,0),&r\notin\mathbb N,\\
 (m-1,1),&r=m\in\mathbb N.
 \end{cases}
\end{equation}
For completeness, the Neumann eigenvalues are the same numbers with
\(j\ge0\); hence
\[
 (k_N,\nu_N)=
 \begin{cases}
 (1+\lfloor r\rfloor,0),&r\notin\mathbb N,\\
 (m,1),&r=m\in\mathbb N,
 \end{cases}
\]
and indeed \(k=k_N+k_D\) and \(\nu_1=\nu_N+\nu_D\).
Combining \eqref{eq:osc-k-nonres}--\eqref{eq:osc-D} gives
\begin{equation}\label{eq:osc-tilde}
 \widetilde k=k-2k_D-\nu_D=
 \begin{cases}
 1,&r\notin\mathbb N,\\
 0,&r\in\mathbb N.
 \end{cases}
\end{equation}

Let us also verify the boundary form directly.  We use the phase coordinates
\(z=(p,y)\), with \(p=\dot y\).  Writing
\(c=\cos(\omega h)\) and \(s=\sin(\omega h)\), the half-period fundamental
matrix is
\[
 K=\begin{pmatrix}
 c&-\omega s\\[1mm]
 s/\omega&c
 \end{pmatrix}.
\]
Since \(N=\left(\begin{smallmatrix}0&1\\1&0\end{smallmatrix}\right)\), a direct
calculation yields
\begin{equation}\label{eq:osc-Q}
 \qK=N-K^TNK
 =2s\begin{pmatrix}
 -c/\omega&s\\
 s&c\omega
 \end{pmatrix}.
\end{equation}
If \(s\ne0\), then
\(\det\qK=-4s^2<0\), so \(\qK\) has inertia \((1,1,0)\) and
\(m^+(\qK)=1\).  If \(s=0\), equivalently \(r\in\mathbb N\), then
\(\qK=0\) and \(m^+(\qK)=0\).  Thus \eqref{eq:osc-tilde} is exactly the
boundary-inertia identity.

Finally,
\[
 M=\begin{pmatrix}
 \cos(\omega T)&-\omega\sin(\omega T)\\
 \sin(\omega T)/\omega&\cos(\omega T)
 \end{pmatrix},
\qquad \sigma(M)=\{e^{2\pi i r},e^{-2\pi i r}\}.
\]
If \(r\in\mathbb N\), then \(M=I_2\), \(\nupos=2\), \(\nustar=0\),
\(\nu_1=a_1=2\), and the first estimate is the equality
\(2=2-2\widetilde k\).  If \(r\notin\mathbb N\) and
\(r\notin\frac12\mathbb Z\), both multipliers are nonreal and
\(\nupos=\nustar=0=2-2\widetilde k\).  At a nonintegral half-integer the two
multipliers equal \(-1\), so the same equality for \(\nupos\) remains true.
Thus the refined positive-real bound is sharp for every scalar oscillator.

\subsection{Decoupled oscillators: a family for which the coarse bound is vacuous}

Consider now on \(\mathbb R^n\)
\begin{equation}\label{eq:multi-osc}
 L(q,v)=\frac12|v|^2-\frac12\sum_{j=1}^n\omega_j^2q_j^2,
 \qquad \omega_j>0,
\end{equation}
and put \(r_j=\omega_jT/(2\pi)\).  All indices and all spectral counts split
as sums of their scalar contributions.  Let
\[
 R:=\#\{j:r_j\in\mathbb N\},\qquad
 Q:=n-R=\#\{j:r_j\notin\mathbb N\}.
\]
From the preceding computation,
\begin{align}\label{eq:multi-indices}
 k&=\sum_{r_j\notin\mathbb N}\bigl(1+2\lfloor r_j\rfloor\bigr)
   +\sum_{r_j=m_j\in\mathbb N}(2m_j-1),\\
 k_D&=\sum_{r_j\notin\mathbb N}\lfloor r_j\rfloor
   +\sum_{r_j=m_j\in\mathbb N}(m_j-1),\nonumber\\
 \nu_D&=R,\qquad
 \widetilde k=k-2k_D-\nu_D=Q.\nonumber
\end{align}
Moreover \(\qK\) is the orthogonal direct sum of the scalar forms
\eqref{eq:osc-Q}; hence
\[
 m^+(\qK)=Q=\widetilde k.
\]
For each scalar block, positive real multipliers occur exactly when
\(r_j\in\mathbb N\), in which case the block is \(I_2\).  A nonintegral
half-integer contributes the pair \((-1,-1)\), while every other nonintegral
value contributes a nonreal conjugate pair.  Therefore
\begin{equation}\label{eq:multi-counts}
 \nupos=2R=2n-2Q=2n-2\widetilde k.
\end{equation}
Thus the refined bound is an equality throughout the whole family.
Furthermore
\[
 \nu_1=a_1=2R,\qquad \nustar=0,
\]
and the estimate involving the algebraic multiplicity at one is also exact:
\[
 \nustar=0=2n-2\widetilde k-a_1.
\]

The gain over the coarse estimate can be arbitrarily large.  Indeed, increasing
any \(\omega_j\) through successive oscillation bands increases \(k\) and
\(k_D\) simultaneously while leaving \(\widetilde k\) equal to either zero or
one on that block.  Hence \(2n-2k\) may tend to \(-\infty\), while the refined
right-hand side \(2n-2\widetilde k\) continues to give the exact nonnegative
spectral count.

A concrete three-dimensional instance makes this especially transparent.
Take
\begin{equation}\label{eq:concrete-ratios}
 (r_1,r_2,r_3)=\left(1,\frac32,2\right).
\end{equation}
The three scalar contributions to \((k,k_D,\nu_D,\widetilde k)\) are
\[
 (1,0,1,0),\qquad (3,1,0,1),\qquad (3,1,1,0),
\]
so that
\begin{equation}\label{eq:concrete-indices}
 n=3,\qquad k=7,
 \qquad k_D=2,
 \qquad \nu_D=2,
 \qquad \widetilde k=1.
\end{equation}
The monodromy is the direct sum of \(I_2\), \(-I_2\), and \(I_2\).  Hence
\begin{equation}\label{eq:concrete-counts}
 \nupos=4,\qquad \nustar=0,\qquad \nu_1=a_1=4.
\end{equation}
The estimate using only the full Morse index reads
\(\nupos\ge 2n-2k=-8\) and is completely vacuous.  In contrast, the refined
estimate gives
\[
 \nupos\ge2n-2\widetilde k=6-2=4,
\]
which is the exact value.  Likewise,
\[
 \nustar\ge2n-2\widetilde k-a_1=6-2-4=0
\]
is sharp.  This example isolates the precise information retained by
\(k-2k_D-\nu_D\) and lost by the full-period index \(k\) alone.

\subsection{A nonconstant autonomous brake orbit}

The preceding three-dimensional example can be realized along an actual
nonconstant brake orbit of the autonomous Lagrangian \eqref{eq:multi-osc}.
Choose \(\omega_1>0\), set
\[
 \omega_2=\frac32\omega_1,\qquad \omega_3=2\omega_1,
 \qquad T=\frac{2\pi}{\omega_1},
\]
and consider
\begin{equation}\label{eq:brake-example}
 x(t)=\bigl(A\cos(\omega_1t),0,0\bigr),\qquad A\ne0.
\end{equation}
Then \(x(T-t)=x(t)\), while \(\dot x(0)=\dot x(T/2)=0\); hence the two
endpoints of the half orbit are brake instants.  Since the Lagrangian is
quadratic, its Jacobi equation along \(x\) is exactly the decoupled system
already computed.  Therefore all the numbers in
\eqref{eq:concrete-indices}--\eqref{eq:concrete-counts} apply unchanged:
\[
 k=7,\quad k_D=2,\quad \nu_D=2,\quad \widetilde k=1,
 \quad \nupos=4,\quad \nustar=0,\quad \nu_1=a_1=4.
\]
The Dirichlet kernel has dimension two.  One of its generators is the
mandatory time-translation field
\[
 \dot x(t)=\bigl(-A\omega_1\sin(\omega_1t),0,0\bigr),
\]
which vanishes at \(0\) and \(T/2\).  The second comes from the resonant third
normal mode \(y_3(t)=\sin(2\omega_1t)\).  Thus this example also shows
concretely that the autonomous inequality \(\nu_D\ge1\) may be strict: extra
Dirichlet null directions can arise from transverse resonances.

It is worth stressing that the large Morse index \(k=7\) does not contradict
the presence of four positive real multipliers.  The two resonant elliptic
blocks contribute four multipliers at \(+1\), whereas the middle block
contributes two multipliers at \(-1\).  The full index counts all negative
variational directions accumulated by the three oscillators; the corrected
quantity \(\widetilde k=1\), instead, detects exactly the single block whose
multipliers fail to be positive real.  This is the mechanism behind the
refined theorem in its simplest fully computable form.

\subsection{A hyperbolic scalar block}

For comparison, take
\[
 L(q,v)=\frac12v^2+\frac12\alpha^2q^2,\qquad \alpha>0.
\]
Then
\[
 \bform_I(y,y)=\int_I\bigl(|\dot y|^2+\alpha^2|y|^2\bigr)\,dt
\]
is positive definite for periodic, Neumann and Dirichlet boundary conditions.
Thus
\[
 k=k_N=k_D=0,\qquad \nu_N=\nu_D=\nu_1=0,
 \qquad \widetilde k=0.
\]
With \(c_h=\cosh(\alpha h)\), \(s_h=\sinh(\alpha h)\),
\[
 K=\begin{pmatrix}c_h&\alpha s_h\\s_h/\alpha&c_h\end{pmatrix},
\qquad
 \qK=-2s_h\begin{pmatrix}c_h/\alpha&s_h\\s_h&\alpha c_h\end{pmatrix}.
\]
The matrix in the last display is positive definite because its determinant is
\(c_h^2-s_h^2=1\); hence \(\qK\) is negative definite and
\(m^+(\qK)=0=\widetilde k\).  The full monodromy has eigenvalues
\(e^{\alpha T}\) and \(e^{-\alpha T}\), so
\[
 \nupos=\nustar=2,\qquad a_1=0.
\]
Both main spectral bounds are equalities.  This elementary block is a useful
reminder that ``positive real'' is a location statement, not a stability
statement: the pair is hyperbolic despite the vanishing Morse index.

\subsection{A nonlinear reversible example: the H\'enon--Heiles $A$-orbit}

The constant-coefficient examples above make the index formulas completely
transparent, but it is also useful to record a genuinely nonlinear model in
which the same half-period mechanism is visible without any reduction to
normal modes. A classical example is provided by the H\'enon--Heiles
Hamiltonian, introduced in \cite{HH64}; the stability and bifurcation theory
of its straight-line orbit has a large literature, and the Lam\'e-equation
analysis relevant to the transverse variational equation may be found in
\cite{BMT01}.
\begin{equation}\label{eq:HH-H}
 H(x,y,p_x,p_y)=\frac12(p_x^2+p_y^2)+V(x,y),
 \qquad
 V(x,y)=\frac12(x^2+y^2)+x^2y-\frac13y^3.
\end{equation}
Equivalently, one may work with the reversible natural Lagrangian
\begin{equation}\label{eq:HH-L}
 L(q,v)=\frac12|v|^2-V(q),\qquad q=(x,y)\in\R^2,
\end{equation}
which satisfies $L(q,-v)=L(q,v)$.

The vertical line $\{x=0\}$ is invariant because $\partial_xV(0,y)=0$. Along
that line the dynamics reduces to the scalar equation
\begin{equation}\label{eq:HH-y}
 \ddot y+y-y^2=0,
 \qquad
 E=\frac12\dot y^2+W(y),
 \qquad
 W(y):=\frac12y^2-\frac13y^3.
\end{equation}
For every energy $0<E<1/6$, the equation $W(y)=E$ has three real roots.
The bounded component of the energy level lies between the negative root
$y_-(E)<0$ and the smaller positive root $y_+(E)\in(0,1)$ and gives a
nonconstant periodic libration.  Denoting this solution by $y_E(t)$ and its
period by $T_E$, the configuration-space orbit
\[
 x_E(t)=(0,y_E(t))
\]
is a periodic brake orbit of \eqref{eq:HH-L}: after a time shift we may assume
\[
 \dot y_E(0)=\dot y_E(T_E/2)=0,
 \qquad
 x_E(T_E-t)=x_E(t).
\]
The half-period interval $[0,T_E/2]$ is therefore the natural domain for the
Dirichlet/Neumann decomposition used throughout the paper.

\begin{figure}[t]
\centering
\begin{tikzpicture}[x=4.2cm,y=10.5cm,>=stealth]
  \draw[->] (-0.58,0) -- (1.58,0) node[right] {$y$};
  \draw[->] (0,-0.012) -- (0,0.205) node[above] {$W(y)$};
  \draw[thick,smooth,domain=-0.52:1.52,samples=220]
       plot (\x,{0.5*\x*\x-\x*\x*\x/3});
  % Illustrative energy E=0.1; the three roots are approximately
  % -0.3976, 0.5671, 1.3305.
  \draw[dashed] (-0.50,0.10) -- (1.47,0.10) node[right] {$E$};
  \draw[densely dotted] (-0.3976,0) -- (-0.3976,0.10);
  \draw[densely dotted] (0.5671,0) -- (0.5671,0.10);
  \draw[densely dotted] (1.3305,0) -- (1.3305,0.10);
  \fill (-0.3976,0.10) circle (0.45pt);
  \fill (0.5671,0.10) circle (0.45pt);
  \fill (1.3305,0.10) circle (0.45pt);
  \node[below] at (-0.3976,0) {$y_-$};
  \node[below] at (0.5671,0) {$y_+$};
  \node[below] at (1.3305,0) {$y_3$};
  \node[below left] at (0,0) {$0$};
  \node[below] at (1,0) {$1$};
  \fill (1,0.1666667) circle (0.45pt);
  \node[above right] at (1,0.1666667) {$\frac16$};
  \draw[<->,thick] (-0.38,0.075) -- (0.55,0.075);
\end{tikzpicture}
\caption{The effective potential $W(y)=\tfrac12y^2-\tfrac13y^3$ on the invariant line $x=0$.  For $0<E<\tfrac16$, the bounded $A$-orbit oscillates between the negative turning point $y_-$ and the smaller positive turning point $y_+$.  The third root $y_3>1$ belongs to the unbounded branch and is not a turning point of the periodic libration.}
\label{fig:HH-potential}
\end{figure}

Along the $A$-orbit, the Hessian of the potential is diagonal:
\begin{equation}\label{eq:HH-hessian}
 D^2V(0,y_E(t))=
 \begin{pmatrix}
 1+2y_E(t)&0\\[1mm]
 0&1-2y_E(t)
 \end{pmatrix}.
\end{equation}
Hence the Jacobi equation splits into a transverse and a longitudinal scalar
problem. Writing a variation as $u=(\xi,\eta)$, the second variation on any
interval $I$ becomes
\begin{equation}\label{eq:HH-splitting-form}
 \bform_I(u,u)=
 \int_I\Bigl(\dot\xi^2-(1+2y_E(t))\xi^2\Bigr)\,\dd
 +\int_I\Bigl(\dot\eta^2-(1-2y_E(t))\eta^2\Bigr)\,\dd.
\end{equation}
Accordingly, on the half-period we introduce
\begin{equation}\label{eq:HH-ops}
 \cA_{\perp,E}:=-\frac{d^2}{dt^2}-(1+2y_E(t)),
 \qquad
 \cA_{\parallel,E}:=-\frac{d^2}{dt^2}-(1-2y_E(t)).
\end{equation}
The first operator governs transverse variations of the invariant line
$x=0$, whereas the second governs variations tangent to the orbit.

\begin{figure}[t]
\centering
\begin{tikzpicture}[x=1.35cm,y=1.25cm,>=stealth]
  \draw[->] (-2.2,0) -- (2.2,0) node[right] {$x$};
  \draw[->] (0,-1.8) -- (0,2.2) node[above] {$y$};
  \draw[thick] (0,-1.15) -- (0,1.35);
  \fill (0,-1.15) circle (1.2pt);
  \fill (0,1.35) circle (1.2pt);
  \node[left] at (0,-1.15) {$y_-$};
  \node[left] at (0,1.35) {$y_+$};
  \draw[->,thick] (0,0.10) -- (0,0.82);
  \node[right] at (0.10,0.52) {$\eta\,e_y$};
  \draw[->,thick] (0,0.10) -- (1.35,0.10) node[above] {$\xi\,e_x$};
  \node[left] at (-0.18,0.62) {$A_E$};
\end{tikzpicture}
\caption{Configuration-space picture of the H\'enon--Heiles $A$-orbit.  During one half-period the orbit moves on the invariant line $x=0$ from one brake instant to the other.  Along the orbit, variations split into the longitudinal component $\eta e_y$ and the transverse component $\xi e_x$.}
\label{fig:HH-geometry}
\end{figure}

The longitudinal equation possesses a canonical Dirichlet Jacobi field.
Indeed, differentiating \eqref{eq:HH-y} gives
\[
 \dddot y_E +(1-2y_E)\dot y_E=0,
\]
so that $\eta=\dot y_E$ solves
\[
 \ddot\eta +(1-2y_E(t))\eta=0.
\]
Since $\dot y_E(0)=\dot y_E(T_E/2)=0$, this field belongs to the
half-period Dirichlet kernel.

\begin{prop}\label{prop:HH-Dirichlet}
For every nonconstant H\'enon--Heiles $A$-orbit at energy $0<E<1/6$, the
half-period Dirichlet nullity satisfies
\[
 \nu_D\ge 1.
\]
More precisely, the longitudinal Dirichlet problem for $\cA_{\parallel,E}$ has
one-dimensional kernel spanned by $\dot y_E$, and the full half-period
Dirichlet problem splits as
\[
 \ker \cA_D
 =\ker (\cA_{\perp,E})_D \oplus \mathrm{span}\{(0,\dot y_E)\}.
\]
Consequently,
\[
 \nu_D=1+\dim\ker (\cA_{\perp,E})_D,
 \qquad
 k_D=k_{D,\perp}+k_{D,\parallel},
 \qquad
 k=k_{\perp}+k_{\parallel}.
\]
\end{prop}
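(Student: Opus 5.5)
The plan is to exploit the diagonal structure \eqref{eq:HH-hessian}. Since $P=I_2$, $Q=0$ and $S=-D^2V(0,y_E(t))$ is diagonal along the orbit, the index form \eqref{eq:HH-splitting-form} is the orthogonal sum of a form in $\xi$ and a form in $\eta$. This holds on each of the form domains $H_P$, $H_N$, $H_D$, since these are products of the corresponding scalar spaces, and there are no cross terms. Negative indices and nullities of an orthogonal direct sum of quadratic forms add. This gives immediately
\[
 k=k_\perp+k_\parallel,\qquad k_D=k_{D,\perp}+k_{D,\parallel},\qquad \nu_D=\dim\ker(\cA_{\perp,E})_D+\dim\ker(\cA_{\parallel,E})_D.
\]
At the operator level, $\ker\cA_D$ likewise splits as the direct sum of the two scalar Dirichlet kernels, embedded as $(\xi,0)$ and $(0,\eta)$.

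It then remains to show that the longitudinal Dirichlet kernel is exactly $\mathrm{span}\{\dot y_E\}$. The computation preceding the statement shows that $\eta=\dot y_E$ solves $\ddot\eta+(1-2y_E)\eta=0$ with $\eta(0)=\eta(T_E/2)=0$. Moreover, $\eta\not\equiv0$ on $[0,T_E/2]$, because $y_E$ is nonconstant there: it travels from one turning point to the other, and $y_-<0<y_+$. So the kernel is at least one-dimensional. For the upper bound, use the scalar Cauchy uniqueness already noted in the paper. Any two solutions of the scalar second-order equation vanishing at $t=0$ are determined by their derivative at $0$, so the space of solutions with $\eta(0)=0$ is one-dimensional. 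Hence $\dim\ker(\cA_{\parallel,E})_D\le 1$, and equality holds.

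Substituting into the additive formula gives $\nu_D=1+\dim\ker(\cA_{\perp,E})_D\ge1$, which is the claim. This is consistent with Theorem~\ref{thm:autonomous}, which already gives $\nu_D\ge1$ abstractly for nonconstant autonomous brake orbits. The only point needing care, more than any genuine obstacle, is that the splitting of indices is legitimate for the three self-adjoint realizations simultaneously. This holds because the boundary conditions (periodic, $p_y=\dot y=0$, $y=0$) act componentwise when $Q=0$. Thus each realization $\mathscr L_\bullet$ is unitarily the direct sum of the two scalar realizations with the same boundary condition, and its eigenvalues with multiplicities are the union of the two scalar spectra.
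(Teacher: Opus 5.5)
Your proof is correct and follows essentially the same route as the paper. Since $Q=0$ and $S$ is diagonal, the index form splits orthogonally into transverse and longitudinal scalar problems on each form domain, so indices and nullities add; $\dot y_E$ is a Dirichlet Jacobi field, and a scalar Dirichlet kernel has dimension at most one by Cauchy uniqueness. You additionally make explicit two points the paper leaves implicit: that $\dot y_E\not\equiv0$ on the half-period, and that the boundary conditions act componentwise.
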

\begin{proof}
The orthogonal decomposition of the quadratic form is exactly
\eqref{eq:HH-splitting-form}, hence both the Dirichlet and the periodic
operators decompose into transverse and longitudinal scalar Sturm--Liouville
problems. The field $(0,\dot y_E)$ belongs to the Dirichlet kernel by the
preceding computation. Since the longitudinal problem is scalar, any
Dirichlet kernel has dimension at most one; therefore
$\ker(\cA_{\parallel,E})_D=\mathrm{span}\{\dot y_E\}$. The formulas for
$\nu_D$, $k_D$ and $k$ are then immediate from the orthogonal splitting.
\end{proof}

Proposition~\ref{prop:HH-Dirichlet} gives a concrete geometric realization of
the autonomous correction term in Theorem~\ref{thm:autonomous}. The
extra Dirichlet null direction is no longer an abstract time-translation
symmetry: in Figure~\ref{fig:HH-geometry} it is simply the tangential
variation of the straight-line brake orbit. Any additional contribution to
$\nu_D$ must come from the transverse operator $(\cA_{\perp,E})_D$.  The same
transverse variational equation is also the classical starting point for the
stability and bifurcation analysis of the $A$-orbit; see, for example, the
Lam\'e-equation treatment in \cite{BMT01}.  Thus the H\'enon--Heiles model
exhibits, in a genuinely nonlinear setting, the longitudinal/transverse
splitting underlying the half-period correction used in our theorem.

We deliberately do not tabulate a numerical value of $(k,k_D,\nu_D)$ for a
specific energy here. Such a table would require an independent numerical or
computer-assisted determination of the transverse Dirichlet spectrum and of
the corresponding Floquet multipliers. The structural point of the example is
instead that the H\'enon--Heiles $A$-orbit fits the hypotheses of the
autonomous theorem exactly, and that the splitting responsible for the refined
index formula can be seen explicitly at the level of the Jacobi equation.

\section{Comparison with the Lin--Zeng instability index}
\label{sec:lin-zeng}

The instability-index theorem of Lin and Zeng \cite{LZ22} is close in spirit
to the present work in that it relates the inertia of a symmetric form to
Hamiltonian spectral information.  The two results, however, address
different linear problems and use different indices.  We record the
distinction because identifying them formally would be misleading.

\subsection{Time-independent generator versus periodic monodromy}

Lin and Zeng consider an autonomous linear Hamiltonian equation
\begin{equation}\label{eq:LZ-generator}
   \frac{du}{dt}=J_0L_0u
\end{equation}
on a real Hilbert space.  Here $J_0$ and $L_0$ are fixed in time; $J_0$ may be
unbounded, while $L_0:X\to X^*$ induces a bounded symmetric bilinear form with
finite negative index and a coercive positive complement.  The spectral
quantities in their theorem are therefore attached to the single generator
$J_0L_0$.  Their theory is designed in particular for Hamiltonian PDEs and
allows continuous spectrum and embedded imaginary eigenvalues.

In the present paper the linear Hamiltonian equation is obtained by
linearizing along a generally nonconstant $T$-periodic solution.  In the
notation of \eqref{eq:Hamiltonian-system} it is
\begin{equation}\label{eq:periodic-LZ-comparison}
   \dot z=JB(t)z,\qquad B(t+T)=B(t),
\end{equation}
with $B(t)$ depending on the reference orbit.  Even when the original
Lagrangian is autonomous, $B(t)$ is generally time dependent along a
nonconstant periodic orbit.  The natural spectral object is consequently the
monodromy matrix
\[
   M=\Phi(T),
\]
not the spectrum of one time-independent generator.  In general there is no
fixed matrix $A$ for which $M=e^{TA}$ in a way compatible with the original
variational problem, and one certainly cannot replace the time-ordered
evolution by
\[
   \exp\!\left(\int_0^T JB(t)\,dt\right)
\]
without additional commutation assumptions.

Thus the two spectral problems are already different at the linear level:
Lin--Zeng count spectral data of an autonomous generator, while our estimates
count Floquet multipliers of a periodic nonautonomous system.

\subsection{Energy index versus action Morse index}

Under hypotheses (H1)--(H3), Lin and Zeng prove the identity
\begin{equation}\label{eq:lin-zeng}
 k_r+2k_c+2k_i^{\leq0}+k_0^{\leq0}=n^-(L_0),
\end{equation}
where $k_r$ and $k_c$ count generalized eigenspaces of $J_0L_0$ in the
positive real and first-quadrant sectors, while the remaining terms measure
the nonpositive inertia of $L_0$ on the relevant imaginary and zero spectral
subspaces; see \cite[Theorem~2.3 and Section~2.4]{LZ22}.  The quantity on the
right is the negative index of the time-independent energy form $L_0$.

Our indices have a different origin.  The number
\[
   k=m^-(\bform_{[0,T]}|_{H_P})
\]
is the Morse index of the fixed-period action Hessian, and $k_D,\nu_D$ come
from its half-period Dirichlet realization.  The refined quantity entering our
Floquet estimate is
\[
   \widetilde k=k-2k_D-\nu_D=m^+(\qK).
\]
There is no general reason for either $k$ or $\widetilde k$ to coincide with
$n^-(L_0)$.  The difference is not merely notational: one index is attached to
a fixed Hamiltonian energy operator, while the others are variational indices
of a periodic orbit and of its boundary-value problems.

A simple equilibrium illustrates the distinction.  Let
\[
   L(q,v)=\frac12v^2+\frac12a^2q^2,\qquad a>0,
\]
and $x\equiv0$.  The periodic action Hessian
\[
   \int_0^T(\dot y^2+a^2y^2)\,dt
\]
is positive definite, so $k=0$.  The corresponding Hamiltonian is
$H(p,q)=\frac12p^2-\frac12a^2q^2$; its energy Hessian has one negative
direction, and the Hamiltonian generator has eigenvalues $\pm a$.  Thus even
in the autonomous constant-coefficient case the action Morse index and the
energy index need not agree.

Accordingly, we do not use the Lin--Zeng theorem in the proof of our results,
and we do not claim a canonical identification between the two index theories.
They are complementary: Lin--Zeng gives an instability index for a time-independent
Hamiltonian generator, including infinite-dimensional problems, whereas the
present estimates use reversibility and half-period action data to control the
Floquet spectrum of a periodic linearization.  A systematic relation between
$n^-(L_0)$ and the indices $k,k_D,\nu_D$ would require additional structure
and lies outside the scope of this paper.

\section{Concluding perspective}

The main outcome of the paper is a quantitative constraint on the Floquet
spectrum of a reversible periodic orbit.  Once the known half-period
Morse-index comparison is specialized to the Neumann--Dirichlet splitting, its
boundary inertia $\widetilde k=k-2k_D-\nu_D$ controls how many multipliers can
leave the positive real axis.  The new algebraic estimate is formulated with
algebraic multiplicities and retains arbitrary Jordan structure at the unit
multiplier.

Two immediate consequences clarify the stability content.  For $k=0$ the
estimate recovers the positive-real-spectrum theorem for reversible minimizers
\cite{Ure18}.  For a nonconstant autonomous brake orbit, time translation
forces $\nu_D\ge1$ and improves the spectral bounds by two; when $a_1=2$, an
index-one orbit is transversely hyperbolic, with $n-1$ stable and $n-1$
unstable directions on a fixed-energy Poincar\'e section.

The examples show that the half-period correction is not cosmetic.  In the
resonant oscillator family the bound based only on the full Morse index can be
vacuous while the refined estimate is exact.  The H\'enon--Heiles $A$-orbit
shows, in a genuinely nonlinear system, how the same half-period data arise
from the longitudinal/transverse splitting of the Jacobi equation.  The
comparison with Lin--Zeng also delineates the scope of the result: our theorem
is a fixed-time Floquet estimate for a generally time-periodic linearization,
not an energy-index theorem for a time-independent Hamiltonian generator.

\section*{Declarations}

\noindent\textbf{Funding.} A.P. gratefully acknowledges support from
INdAM--GNAMPA and FERA\_RILO23.
L.W. is partially supported by the National Natural Science Foundation
of China (NSFC No.\ 12171281).

\smallskip
\noindent\textbf{Declaration of interests.} The authors declare that they have no known competing financial interests or personal relationships that could have appeared to influence the work reported in this paper.

\smallskip
\noindent\textbf{Data availability.} No data sets were generated or analysed during the present study.

\smallskip
\noindent\textbf{Use of generative artificial intelligence.} During the preparation of this manuscript, the authors used ChatGPT (OpenAI) for language editing, organizational assistance, literature-search support, and consistency checks.  All mathematical statements, derivations, proofs, citations, and references were independently checked and approved by the authors, who take full responsibility for the content of the article.

\bigskip
\noindent Prof. Alessandro Portaluri\\
Universit\`a degli Studi di Torino (DISAFA)\\
Largo Paolo Braccini 2, 10095 Grugliasco, Torino, Italy\\
\href{mailto:alessandro.portaluri@unito.it}{\texttt{alessandro.portaluri@unito.it}}

\medskip
\noindent Prof. Li Wu\\
School of Mathematics, Shandong University\\
Jinan, 250100, P.~R.~China\\
\href{mailto:vvvli@sdu.edu.cn}{\texttt{vvvli@sdu.edu.cn}}

\end{document}